\let\originalleft\left
\let\originalright\right
\renewcommand{\left}{\mathopen{}\mathclose\bgroup\originalleft}
\renewcommand{\right}{\aftergroup\egroup\originalright}
\setlist[enumerate,1]{label=(\roman*)}
\setlist[enumerate,2]{label=\alph*.}
\let\emptyset\varnothing
\newcommand{\C}{\mathbb{C}}
\newcommand{\B}{\mathbb{B}}
\DeclarePairedDelimiter\abs{\lvert}{\rvert}%
\DeclarePairedDelimiter\norm{\lVert}{\rVert}%
\newcommand{\interior}[1]{%
	{\kern0pt#1}^{\mathrm{o}}%
} 
\DeclarePairedDelimiter\inner{\langle}{\rangle}%
\let\oldabs\abs
\def\abs{\@ifstar{\oldabs}{\oldabs*}}
\let\oldnorm\norm
\def\norm{\@ifstar{\oldnorm}{\oldnorm*}}
\let\oldinner\inner
\def\inner{\@ifstar{\oldinner}{\oldinner*}}
\crefname{theorem}{Theorem}{Theorems}
\crefname{lemma}{Lemma}{Lemmas}
\crefname{corollary}{Corollary}{Corollaries}
\crefname{proposition}{Proposition}{Propositions}
\crefname{conjecture}{Conjecture}{Conjectures}
\crefname{question}{Question}{Questions}
\crefname{definition}{Definition}{Definitions}
\crefname{example}{Example}{Examples}
\crefname{remark}{Remark}{Remarks}
\crefname{question}{Question}{Questions}
\crefname{enumi}{}{}
\crefname{equation}{}{}
\newtheorem{theorem}{Theorem}[section]
\newtheorem{corollary}[theorem]{Corollary}
\newtheorem{lemma}[theorem]{Lemma}
\newtheorem{proposition}[theorem]{Proposition}
\theoremstyle{definition}
\newtheorem{definition}[theorem]{Definition}
\theoremstyle{remark}
\newtheorem{remark}[theorem]{Remark}
\newtheorem{notation}[theorem]{Notation}
\newcommand{\bB}{\mathbb B}
\newcommand{\bD}{\mathbb D}
\newcommand{\beq}{\begin{equation}}
\newcommand{\eeq}{\end{equation}}
\def\sideremark#1{\ifvmode\leavevmode\fi\vadjust{\vbox to0pt{\vss
 \hbox to 0pt{\hskip\hsize\hskip1em
 \vbox{\hsize2.7cm\tiny\raggedright\pretolerance10000
  \noindent #1\hfill}\hss}\vbox to8pt{\vfil}\vss}}}
\begin{document}

\title{Proper maps of ball complements \& differences and rational sphere maps}
\author{Abdullah Al Helal}
\address{Department of Mathematics, Oklahoma State University, Stillwater, OK 74078-5061}
\email{ahelal@okstate.edu}

\author{Ji{\v r}{\' i} Lebl}
\address{Department of Mathematics, Oklahoma State University, Stillwater, OK 74078-5061}
\email{lebl@okstate.edu}
\thanks{The second author was in part supported by Simons Foundation collaboration grant 710294.}

\author{Achinta Kumar Nandi}
\address{Department of Mathematics, Oklahoma State University, Stillwater, OK 74078-5061}
\email{acnandi@okstate.edu}

\date{\today}

\subjclass[2020]{32H35, 32A08, 32H02}
\keywords{rational sphere maps, proper holomorphic mappings}


\dedicatory{}

\begin{abstract}
We consider proper holomorphic maps of ball complements and differences in
complex euclidean spaces of dimension at least two.
Such maps are always rational, which naturally leads to a
related problem of classifying rational maps taking concentric spheres to 
concentric spheres, what we call $m$-fold sphere maps; a proper map of the 
difference of concentric balls is a $2$-fold sphere map.
We prove that proper maps of ball complements are in one to one correspondence
with polynomial proper maps of balls taking infinity to infinity.
We show that rational $m$-fold sphere maps of degree less than $m$
(or polynomial maps of degree $m$ or less) must take
all concentric spheres to concentric spheres and we provide a complete
classification of them.  We prove that these degree bounds are sharp.

\end{abstract}

\maketitle


\allowdisplaybreaks

\section{Introduction}

Studying proper holomorphic maps between domains is a common problem in
several complex variables.  In general, such maps do not exist unless we
choose specific domains.  If the domain has many symmetries, such as the
unit ball $\bB_n \subset \C^n$, many maps exist.
Alexander~\cite{alexander-1977-proper} proved
that every proper holomorphic map
$f \colon \bB_n \to \bB_n$, $n > 1$, is an automorphism, and hence rational.
On the other hand, Dor~\cite{dor-1990-proper} showed that there exist proper
holomorphic maps from $\bB_n$ to $\bB_{n+1}$ that are continuous up to the
boundary and are not rational.  Forstneri\v{c}~\cite{forstneric-1989-extending} proved that
if for some neighborhood $U$ of a point $p$ on the sphere, 
a sufficiently smooth $f \colon U \cap \overline{\bB_n} \to \C^N$
is holomorphic on $U \cap \bB_n$ and $f(U \cap S^{2n-1}) \subset S^{2N-1}$,
then $f$ is rational and extends to a proper map of $\bB_n$ to $\bB_N$.
Moreover, the degree of $f$ is bounded by a function of $n$ and $N$ alone.
We will make extensive use of Forstneri\v{c}'s result in this work.
For simplicity, by a proper map of balls, we will always mean a proper
holomorphic map $f \colon \bB_n \to \bB_N$.
Rudin~\cite{rudin-1984-homogeneous} proved that every homogeneous
polynomial proper map of balls is unitarily equivalent to the symmetrized $m$-fold
tensor product of the identity map, while
D'Angelo~\cite{dangelo-1988-polynomial} found a much simpler proof of Rudin's result
and moreover gave complete procedure for constructing all
polynomial proper maps of balls.
A key idea in D'Angelo's work, and one that we will make use of, is that given 
two vector valued polynomials $p \colon \C^n \to \C^m$
and $q \colon \C^n \to \C^k$ such that $\norm{p(z)}^2=\norm{q(z)}^2$ for all
$z$, there is a unitary map $U$ such that $U (p \oplus 0) = q\oplus 0$.
By $\oplus 0$ we mean we add zero components if they are needed to match the target dimensions.
For more information on this subject, see~\cites{faran-1982-maps,forstneric-1993-proper,huang-1999-linearity,hamada-2005-rational,huang-2006-new,huang-2014-third,lebl-2024-exhaustion} and the references therein.
In particular, the books by
D'Angelo~\cites{dangelo-1993-several,dangelo-2019-hermitian,dangelo-2021-rational} are relevant.

We change the point of view slightly and consider the complements of balls and
the differences of balls, that is, without loss of generality
we study maps between the sets of the form $\C^n \setminus \overline{\bB_n}$ and
$\bB_n \setminus \overline{B_r(c)}$, where $B_r(c)$ denotes the ball of radius $r$
centered at $c$.
As before, we say proper map of ball complements to mean a proper holomorphic map
$f \colon \C^n \setminus \overline{\bB_n} \to \C^N \setminus \overline{\bB_N}$.
Similarly, by proper map of ball differences we mean a proper holomorphic map
$f \colon \bB_n \setminus \overline{B_r(c)} \to \bB_N \setminus \overline{B_R(C)}$.
We remark that proper maps to domains that are complements of balls
have been studied previously; see~\cite{forstnerivc-2014-oka}.
However, we wish the domain of the map to also be a ball complement.
First, we characterize all proper maps of ball complements.

\begin{theorem} \label{thm:complements}
Suppose $f \colon \C^n \setminus \overline{\bB_n} \to \C^N \setminus \overline{\bB_N}$,
$n \geq 2$, is a proper holomorphic map.
Then $f$ is a polynomial map, and when this polynomial is restricted to
$\bB_n$, it gives a proper map to $\bB_N$.

Conversely, 
suppose $p \colon \C^n \to \C^N$ is a polynomial that takes 
$\bB_n$ to $\bB_N$ properly. Then 
\begin{enumerate}
\item
$p(\C^n \setminus \overline{\bB_n}) \subset \C^N \setminus \overline{\bB_N}$, and
\item
if also
$\norm{p(z)} \to \infty$ as $\norm{z} \to \infty$, then
$p$ is a proper map of
$\C^n \setminus \overline{\bB_n}$ to $\C^N \setminus \overline{\bB_N}$.
\end{enumerate}
\end{theorem}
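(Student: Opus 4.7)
The forward direction proceeds in three steps. First, since $n \geq 2$ and $\overline{\bB_n}$ is compact, Hartogs' extension theorem extends $f$ across the closed ball to an entire map $\tilde f \colon \C^n \to \C^N$. Second, I would show that $\tilde f(\partial \bB_n) \subset \partial \bB_N$ by exploiting properness: for $z_0 \in \partial \bB_n$ and $z_k \in \C^n \setminus \overline{\bB_n}$ approaching $z_0$, properness of $f$ prevents $f(z_k)$ from accumulating in $\C^N \setminus \overline{\bB_N}$, while $\norm{f(z_k)} > 1$ together with continuity of $\tilde f$ prevents the limit from lying in $\bB_N$, forcing $\norm{\tilde f(z_0)} = 1$. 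Third, Forstneri\v c's theorem applied in a neighborhood of any sphere point gives rationality of $\tilde f$; being both rational and entire, it must be a polynomial, and Forstneri\v c's theorem simultaneously guarantees that $\tilde f|_{\bB_n}$ is a proper map into $\bB_N$.

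Part (i) of the converse is the heart of the matter. Given $z \in \C^n$ with $\norm{z} > 1$, write $z = t\xi$ with $\norm{\xi} = 1$ and $t = \norm{z} > 1$, and study the one-variable polynomial map $P_\xi(s) := p(s\xi)$ from $\C$ into $\C^N$. Because $p$ is a polynomial proper map of balls it sends $\partial \bB_n$ into $\partial \bB_N$, so $\norm{P_\xi(s)}^2 = 1$ on $\abs{s} = 1$. Using $\bar s = 1/s$ on the circle, the reciprocal polynomial trick converts this into the polynomial identity
\[
\sum_{j=1}^N P_{\xi,j}(s) P_{\xi,j}^*(s) = s^d,
\]
valid for all $s \in \C$, where $d$ bounds the degree of $p$ and $P_{\xi,j}^*(s) = s^d \overline{P_{\xi,j}}(1/s)$ is the reciprocal polynomial. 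Specialising to real $s > 0$ and dividing by $s^d$ gives $\sum_j P_{\xi,j}(s) \overline{P_{\xi,j}(1/s)} = 1$, whence the Cauchy--Schwarz inequality yields $\norm{P_\xi(s)} \cdot \norm{P_\xi(1/s)} \geq 1$. At $s = t > 1$ the point $(1/t)\xi$ lies in $\bB_n$, so $\norm{P_\xi(1/t)} < 1$ strictly because $p \colon \bB_n \to \bB_N$ is proper, forcing $\norm{P_\xi(t)} > 1$ and thus $p(z) \in \C^N \setminus \overline{\bB_N}$.

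Part (ii) follows quickly from part (i). For any sequence $z_k$ in $\C^n \setminus \overline{\bB_n}$ escaping every compact subset, pass to a subsequence converging in $\C^n \cup \{\infty\}$ to a point on $\partial \bB_n$ or to $\infty$. In the first case, continuity of $p$ together with $p(\partial \bB_n) \subset \partial \bB_N$ sends $p(z_k)$ into $\partial \bB_N$; in the second, the growth hypothesis sends $\norm{p(z_k)} \to \infty$. In either case the image escapes every compact subset of $\C^N \setminus \overline{\bB_N}$, establishing properness.

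The main obstacle I foresee is isolating the algebraic identity in part (i). The geometric statement $\norm{P_\xi}^2 \equiv 1$ on the circle converts, via the reciprocal polynomial construction, into a polynomial identity from which Cauchy--Schwarz extracts the outer norm bound; finding this bridge is the one non-routine move in the argument. The forward direction is comparatively standard, combining Hartogs extension with Forstneri\v c's rationality theorem, and part (ii) reduces to a direct sequential argument.
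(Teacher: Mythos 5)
Your proof is correct, and the forward direction (Hartogs extension, boundary goes to boundary by properness, Forstneri\v{c}, entire plus rational equals polynomial) is exactly the paper's argument. For part (i) of the converse you arrive at the same decisive step as the paper --- the reflection identity $p(z)\cdot\overline{p(z/\norm{z}^2)}=1$ followed by Cauchy--Schwarz against the strict bound $\norm{p(w)}<1$ for $w$ in the open ball --- but you reach that identity by a different route. The paper works in all $n$ variables at once: it writes $\norm{p(z)}^2-\norm{q(z)}^2=Q(z,\bar z)\bigl(\norm{z}^2-1\bigr)$, polarizes, and substitutes $w=z/\norm{z}^2$. You instead slice along complex lines $s\mapsto s\xi$ and run the one-variable reciprocal-polynomial argument on the unit circle; for real $s=t$ your identity $\sum_j P_{\xi,j}(t)\overline{P_{\xi,j}(1/t)}=1$ is precisely the paper's identity restricted to the line, since $(t\xi)/\norm{t\xi}^2=\xi/t$. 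Your version is somewhat more elementary --- it needs only the one-variable identity theorem rather than polarization of a real-analytic identity in several variables --- at the cost of being stated only for polynomial maps; the paper's Lemma~\ref{lemma:outside} is proved for general rational sphere maps (and even for $n=1$), which is reused later for ball differences, though your slicing argument would extend to that case by applying the same reciprocal trick to numerator and denominator. Part (ii) is the same immediate sequential argument in both, with your treatment of sequences accumulating at $\partial\bB_n$ spelled out slightly more carefully than the paper's.
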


The condition on the norm is clearly satisfied for proper maps
of complements, but it is not clear if it is automatically
true for polynomial proper maps of balls.  We prove that
the condition is satisfied if the polynomial takes the origin to the origin,
or more trivially, if the top degree terms of $p$ do not vanish on the sphere.

That the proper map of ball complements is polynomial follows from a combination of the Hartogs phenomenon
and Forstneri\v{c}'s theorem mentioned above.
The key is then to show that polynomial maps of
balls are precisely those that also take the outside to the outside.
The theorem does not hold if $n=1$:
$\C \setminus \overline{\bD}$ is biholomorphic to the punctured disc $\bD^*$,
which properly, and certainly not rationally, embeds into $\C^N$ via the classical theorem of
Remmert--Bishop--Narasimhan (see~\cite{forstneric-2017-stein-manifolds}*{Theorem~2.4.1} and also Alexander~\cite{alexander-1977-punctured-disc} for an explicit embedding into $\C^2$).
Such a map certainly avoids some small closed ball.
A more complicated example where the sphere in the boundary of the target
plays a role
can be constructed via modifying the technique in the examples following Proposition~\ref{prop:difference-complement}.
What is true, even in one dimension, is that a rational map
$f \colon \C^n \dashrightarrow \C^N$ that restricts to a proper map of $\bB_n$
to $\bB_N$
takes all the (nonpole) points of $\C^n \setminus \overline{\bB_n}$ to
$\C^N \setminus \overline{\bB_N}$; see Lemma~\ref{lemma:outside}.

Proper maps of differences of balls are somewhat more complicated.
Via a similar argument,
again heavily dependent on the result of Forstneri\v{c},
we prove the following result.

\begin{theorem} \label{thm:difference}
Suppose $n \geq 2$ and $B_r(c) \subset \C^n$,
$B_R(C) \subset \C^N$ are two balls such that
$B_r(c) \cap \bB_n \not= \emptyset$ and
$B_R(C) \cap \bB_N \not= \emptyset$.
Suppose $f \colon \bB_n \setminus \overline{B_r(c)} \to \bB_N \setminus \overline{B_R(C)}$
is a proper holomorphic map.
Then $f$ is rational
and extends to a rational proper map of balls $f \colon \bB_n \to \bB_N$
that takes the sphere $(\partial B_r(c)) \cap \bB_n$ to 
the sphere $(\partial B_R(C)) \cap \bB_N$.
If $c=0$ and $C=0$ and $f=\frac{p}{q}$ is written in lowest terms, then $\deg q < \deg p$.
Conversely, every proper holomorphic map $f \colon \bB_n \to \bB_N$
that takes the sphere $(\partial B_r(c)) \cap \bB_n$ to 
the sphere $(\partial B_R(C)) \cap \bB_N$
is rational and restricts to a proper map of
$\bB_n \setminus \overline{B_r(c)}$ to $\bB_N \setminus \overline{B_R(C)}$.
\end{theorem}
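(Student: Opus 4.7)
The plan is to combine Hartogs extension with two applications of Forstnerič's theorem to obtain both the rational structure and the proper extension to $\bB_n$. I focus on the clean case $\overline{B_r(c)} \subset \bB_n$; the ``biting'' case where $B_r(c)$ is not relatively compact in $\bB_n$ will be similar, with the Hartogs step replaced by a direct application of Forstnerič near a point of $\partial \bB_n \setminus \overline{B_r(c)}$.

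First, since $n \geq 2$ and $\overline{B_r(c)}$ is compact in $\bB_n$, the Hartogs extension theorem extends the bounded holomorphic map $f$ to all of $\bB_n$. The plurisubharmonic function $\norm{f(z)}^2$ then forces $f(\bB_n) \subset \bB_N$ by the maximum principle, and combining continuity with properness of the original $f$ sends $\partial B_r(c) \cap \bB_n$ into $\partial B_R(C) \cap \bB_N$. I then apply Forstnerič's theorem at a point of $\partial B_r(c) \cap \bB_n$: after the affine substitutions $\tilde z = (z-c)/r$ and $\tilde w = (w-C)/R$ that normalize both inner balls to unit balls, the hypotheses of Forstnerič are met, yielding rationality of $f$ together with the fact that $f$ is a proper rational map $B_r(c) \to B_R(C)$.

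To promote this to $f$ being proper as a map $\bB_n \to \bB_N$, I apply Forstnerič a second time near $\partial \bB_n$. The key step, and the main obstacle of the proof, is to locate a point of $\partial \bB_n$ at which $f$ is smooth and lands on $\partial \bB_N$ rather than on $\partial B_R(C)$. The plurisubharmonic function $g(z) = \norm{f(z) - C}^2 - R^2$ vanishes on $\partial B_r(c) \cap \bB_n$ and is strictly positive on $\bB_n \setminus \overline{B_r(c)}$; if the cluster set of $f$ on $\partial \bB_n$ were entirely contained in $\partial B_R(C)$, the maximum principle applied to $g$ on the subdomain $\bB_n \setminus \overline{B_r(c)}$ would yield a contradiction. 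Writing $f = p/q$ in lowest terms (possible now that $f$ is rational), $q$ vanishes only on a proper algebraic subvariety, so a connectedness argument on the complement forces $f$ to send a whole open piece of $\partial \bB_n$ into $\partial \bB_N$; Forstnerič's theorem applied at such a point then yields the proper rational extension $\bB_n \to \bB_N$.

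For the degree inequality when $c = C = 0$, I decompose $p = \sum p_k$ and $q = \sum q_k$ into homogeneous parts with $d = \deg p$ and $e = \deg q$. The identities $\norm{p(z)}^2 = |q(z)|^2$ on $\norm{z}=1$ and $\norm{p(z)}^2 = R^2 |q(z)|^2$ on $\norm{z}=r$ become, under the parametrization $z = \lambda w$ with $\norm{w}=1$, trigonometric identities in $\arg \lambda$. Matching the top Fourier coefficient forces $(1 - R^2)\, q_d(w)\overline{q_0} = 0$ on the unit sphere in the case $d = e$; since a holomorphic polynomial vanishing on the sphere is identically zero and $q_d \not\equiv 0$, this gives $q_0 = q(0) = 0$, contradicting the fact that $q$ has no zeros in $\bB_n$ (otherwise $f = p/q$ in lowest terms could not extend holomorphically to the origin). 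The case $d < e$ is ruled out by the outside-to-outside lemma, since $\norm{f(z)} \to 0$ at infinity would contradict $\norm{f(z)} > 1$ outside $\overline{\bB_n}$. Thus $d > e$. The converse direction then follows by applying the same PSH maximum principle argument in reverse, together with the rationality guaranteed by Forstnerič.
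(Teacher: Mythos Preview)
Your approach diverges from the paper's in several interesting places but has two genuine gaps.

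\textbf{Where your route differs but works (compact case).} The paper never invokes global Hartogs nor a PSH maximum-principle argument. Instead it uses a single uniform mechanism: local extension across the pseudoconcave piece $\partial B_r(c)\cap\bB_n$, together with the observation that a holomorphic map of generic full rank must take a strictly pseudoconcave (resp.\ pseudoconvex) boundary piece to a boundary piece of the same type. This rank/Levi-form argument simultaneously decides ``inner sphere $\to$ inner sphere'' and, after rationality, ``outer sphere $\to$ outer sphere''. Your alternative---max principle on $\norm{f}^2$ to trap $f(\partial B_r(c))$ inside $\bB_N$, and then the PSH function $g=\norm{f-C}^2-R^2$ plus a real-analytic connectedness argument on $\partial\bB_n\setminus\{q=0\}$ to force the outer sphere to the outer sphere---is a legitimately different and valid route in the compact case. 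Likewise, your Fourier-coefficient argument for $\deg q<\deg p$ is correct and more elementary than the paper's, which deduces this from Lemma~\ref{lem:rational-k-fold} (ultimately resting on Huang's lemma).

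\textbf{Gap 1: the biting case.} Your suggested fix, ``direct application of Forstneri\v{c} near a point of $\partial\bB_n\setminus\overline{B_r(c)}$'', cannot get off the ground: $\partial\bB_n$ is pseudoconvex from the domain side, so there is no a~priori extension or smoothness of $f$ there, and Forstneri\v{c}'s hypotheses are not met. The paper's approach avoids this entirely by extending first across the inner (pseudoconcave) boundary---which is available in both the compact and biting cases---and using the rank argument rather than your max-principle step (which needed $f$ already extended to all of $\bB_n$) to identify the target sphere. Once Forstneri\v{c} at the inner sphere yields rationality and a proper map $B_r(c)\to B_R(C)$, the extension to $\bB_n$ and the second application of Forstneri\v{c} follow.

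\textbf{Gap 2: the converse.} ``The same PSH maximum principle argument in reverse'' does not work: the maximum principle bounds $g=\norm{f-C}^2-R^2$ from above, not below, so it cannot show $g>0$ on $\bB_n\setminus\overline{B_r(c)}$. The paper instead observes that Forstneri\v{c} at the inner sphere makes $f$ a rational proper map $B_r(c)\to B_R(C)$, and then invokes Lemma~\ref{lemma:outside} (the reflection/Cauchy--Schwarz ``outside-to-outside'' lemma) to conclude that no point outside $\overline{B_r(c)}$ can land in $\overline{B_R(C)}$. You need that lemma (or an equivalent) here.
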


If $n=1$, the theorem need not hold.  For example the function
$z \mapsto \frac{1}{z}$ takes an annulus centered at the origin
to an annulus, swaps the inside and outside circles and does not
extend to the entire disc.
The theorem leads us to study what we call rational $m$-fold sphere maps.

\begin{definition}
A rational map $f \colon \C^n \dashrightarrow \C^N$ is an \emph{$m$-fold sphere map} if
there exist $2m$ numbers $0 < r_1 < r_2 < \cdots < r_m < \infty$ and
$0 < R_1 , R_2 , \ldots , R_m < \infty$, such that the pole set of $f$ misses $r_mS^{2n-1}$ (and therefore $r_jS^{2n-1}$ for all $j$)
and $f(r_jS^{2n-1}) \subset R_jS^{2N-1}$
for all $j = 1,\ldots,m$.
If there are infinitely many such numbers $r_j$ and $R_j$,
then we say that $f$ is an \emph{$\infty$-fold sphere map}.
We will call spheres such as $rS^{2n-1}$ \emph{zero-centric spheres} and
balls such as $r \B_n$ \emph{zero-centric balls}.
\end{definition}

In light of this definition, the theorem above says that every proper holomorphic map
of ball complements centered at the origin is a rational $2$-fold sphere map.
We remark that by Proposition~\ref{prop:b1}, we can assume that $R_1 < R_2 < \cdots < R_m$.

For the tensor of two polynomial maps we find that $\norm{f \otimes g}^2 = \norm{f}^2 \norm{g}^2$.
Hence $\norm{z^{\otimes d}}^2 = \norm{z}^{2d}$, and so $z^{\otimes d}$ is a homogeneous
$\infty$-fold sphere map.
The map $z^{\otimes d}$ does not have linearly independent components for $d \geq 2$, but after
applying a unitary and a projection we find a symmetrized homogeneous map $H_d$ with linearly
independent components such that $\norm{z^{\otimes d}}^2 = \norm{H_d(z)}^2$, where the number of components of $H_d$ is the
rank of the underlying hermitian form of $\norm{z^{\otimes d}}^2$.
For example in $\C^2$,
$(z_1,z_2)^{\otimes 2} = (z_1^2,z_1z_2,z_2z_1,z_2^2)$
is symmetrized to
$H_2(z_1,z_2) = (z_1^2,\sqrt{2}z_1z_2,z_2^2)$.
See D'Angelo~\cite{dangelo-1993-several}.

By Rudin's theorem we mentioned above,
a homogeneous $1$-fold sphere map is 
unitarily equivalent to
a scalar multiple of $H_d$.  Thus
every homogeneous $1$-fold sphere map is an $\infty$-fold sphere map.
Similarly, if we take a direct sum of properly scaled homogeneous sphere maps, we get an
$\infty$-fold sphere map.  In fact, we have the following:

\begin{theorem} \label{thm:mfoldmaps}
Suppose that $f \colon \C^n \dashrightarrow \C^N$, $n \geq 2$, is a rational $m$-fold sphere map, where $1 \leq m \leq \infty$.
\begin{enumerate}[label=(\roman*)]
\item
If $m < \infty$ and $f$ is a polynomial map of degree $m$ or less, then
$f$ is an $\infty$-fold sphere map.
\item
If $m < \infty$ and $f$ is a rational map of degree $m-1$ or less, then
$f$ is an $\infty$-fold sphere map.
\end{enumerate}
If $f$ is an $\infty$-fold sphere map, then
$f$ is polynomial and for every $r > 0$ there exists an $R > 0$ such that
$f(rS^{2n-1}) \subset RS^{2N-1}$.
Moreover, there exists a unitary $U \in U(\C^N)$
and homogeneous sphere maps (possibly constant)
$h_j \colon \C^n \to \C^{\ell_j}$, $j=1,\ldots,k$ and where $\ell_1+\cdots+\ell_k \leq N$, such that
\begin{equation*}
f = U ( h_1 \oplus \cdots \oplus h_k \oplus 0).
\end{equation*}
\end{theorem}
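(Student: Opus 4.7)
My plan is to handle (i), (ii), and the characterization of $\infty$-fold maps in that order. Each part reduces to showing that $\|f(z)\|^2$ depends only on $\|z\|^2$, via the standard Hermitian polynomial division lemma (a polynomial in $z, \bar z$ vanishing on the sphere $\|z\|^2 = r^2$ is divisible by $\|z\|^2 - r^2$) together with the Lagrange interpolating polynomial $\psi(t)$ of degree at most $m-1$ satisfying $\psi(r_j^2) = R_j^2$.

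For (i), I would check that $G := \|f(z)\|^2 - \psi(\|z\|^2)$ vanishes on every sphere $r_j S^{2n-1}$ and hence factors as $G = H \prod_j (\|z\|^2 - r_j^2)$; since $\deg G \leq \max(2\deg f, 2m-2) \leq 2m$ and the product has degree $2m$, the quotient $H$ must be a constant. Thus $\|f\|^2$ is a polynomial in $\|z\|^2$ and $f$ is $\infty$-fold.

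For (ii), write $f = p/q$ in lowest terms with $\deg p, \deg q \leq m-1$. On each $r_j S^{2n-1}$ one has $\|p\|^2 = R_j^2|q|^2$, which yields
\begin{equation*}
\|p(z)\|^2 - \psi(\|z\|^2)\, |q(z)|^2 = H(z, \bar z) \prod_{j=1}^m (\|z\|^2 - r_j^2).
\end{equation*}
The key step is a top total-degree comparison where $n \geq 2$ is essential. If $\deg\psi + \deg q > \deg p$, then the top total-degree part of the left side is $-a_s \|z\|^{2s}\, q_t(z)\overline{q_t(z)}$ with $s = \deg\psi$, $t = \deg q$; matching against the top $H_{\mathrm{top}}\, \|z\|^{2m}$ of the right side forces $\|z\|^{2(m-s)}$ to divide $q_t(z)\overline{q_t(z)}$ in $\C[z, \bar z]$. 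For $n \geq 2$ the polynomial $\|z\|^2 = \sum_i z_i \bar z_i$ is irreducible in the UFD $\C[z, \bar z]$ and cannot occur as a factor of a rank-one product of a pure-$z$ and a pure-$\bar z$ polynomial, so since $s \leq m-1 < m$ this is a contradiction. Thus $\deg\psi + \deg q \leq \deg p \leq m-1$, forcing the left side to have total degree $\leq 2(m-1) < 2m$ and hence $H = 0$. The polynomial identity $\|p\|^2 = \psi(\|z\|^2)|q|^2$ then shows that every zero of $q$ is a common zero of all the $p_i$; by the Nullstellensatz any irreducible factor of $q$ would have to divide every $p_i$, contradicting the lowest-terms condition $\gcd(p_1, \ldots, p_N, q) = 1$ unless $q$ is constant. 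Then $f$ is polynomial of degree $\leq m-1$ and part (i) applies.

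For the $\infty$-fold characterization, pick any $\zeta, \zeta' \in S^{2n-1}$ outside the pole set of $f$: the function $r \mapsto \|f(r\zeta)\|^2 - \|f(r\zeta')\|^2$ is a rational function in $r$ with infinitely many zeros (the $r_j$'s), hence identically zero. So $\|f\|^2$ is $U(n)$-invariant and by classical invariant theory is a rational function of $\|z\|^2$, which already yields the ``for every $r > 0$ there exists $R$'' statement. Applying the coprimality argument from (ii) to the identity $\|p\|^2 B(\|z\|^2) = A(\|z\|^2) |q|^2$ forces $q$ to be constant, making $f$ polynomial and $\|f\|^2 = \phi(\|z\|^2)$ a polynomial in $\|z\|^2$. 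Now decompose $f = f_0 + \cdots + f_d$ into its homogeneous components: since $\phi(\|z\|^2)$ has only bihomogeneous bidegrees $(k, k)$, comparing bidegrees in $\|f\|^2 = \sum_{k_1, k_2} \inner{f_{k_1}(z), f_{k_2}(z)}$ yields $\inner{f_{k_1}(z), f_{k_2}(z)} \equiv 0$ for $k_1 \neq k_2$. This polynomial identity is equivalent to every coefficient vector of $f_{k_1}$ in $\C^N$ being Hermitian-orthogonal to every coefficient vector of $f_{k_2}$, so the subspaces $V_k \subset \C^N$ spanned by the coefficients of $f_k$ are mutually orthogonal with $\sum_k \dim V_k \leq N$. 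Choosing a unitary $U$ that aligns the $V_k$'s with standard coordinate blocks then gives $f = U(h_1 \oplus \cdots \oplus h_k \oplus 0)$, and the identity $\|h_j(z)\|^2 = c_j \|z\|^{2 \deg h_j}$ (read off from the bidegree $(\deg h_j, \deg h_j)$ piece of $\phi$) makes each $h_j$ a homogeneous sphere map, constant when $\deg h_j = 0$. The main obstacle is the top-degree argument in part (ii), which relies crucially on $n \geq 2$ through the irreducibility of $\|z\|^2$ in $\C[z, \bar z]$ and its incompatibility with the rank-one polynomial $q_t(z)\overline{q_t(z)}$.
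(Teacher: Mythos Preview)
Your proposal is correct and runs parallel to the paper's argument, but with several technical choices that differ and are in places more elementary. Both proofs reduce everything to the identity ``$\|f\|^2$ is a polynomial in $\|z\|^2$'' via repeated division by $\|z\|^2-r_j^2$; the paper packages this as a Newton divided-difference expansion, you as subtraction of the Lagrange interpolant---equivalent bookkeeping. For the crucial top-degree step in~(ii) the paper invokes Huang's lemma (a nontrivial multiple of $\|z\|^2$ cannot have Hermitian rank $<n$), whereas your direct observation that the irreducible $\|z\|^2\in\C[z,\bar z]$ cannot divide a product $q_t(z)\overline{q_t(z)}$ handles the rank-one case needed here without that machinery. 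To pass from $\|p\|^2=\psi(\|z\|^2)\,|q|^2$ to ``$q$ is constant,'' the paper argues via positive-semidefiniteness of the coefficient matrix together with D'Angelo's $\|P\|^2=\|f\|^2\Rightarrow P=Uf$ lemma, while your Nullstellensatz argument is more direct. For the final decomposition the paper again cites D'Angelo's lemma; your bidegree-orthogonality argument builds the unitary by hand and is self-contained.

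There is one genuine gap. In the $\infty$-fold case your ``coprimality argument from~(ii)'' does not transfer verbatim to $\|p\|^2\,B(\|z\|^2)=A(\|z\|^2)\,|q|^2$: a zero $z_0$ of $q$ only yields $\|p(z_0)\|^2\,B(\|z_0\|^2)=0$, and you have not excluded $B(\|z_0\|^2)=0$. This is repairable (e.g.\ show $B$ has no zeros on $[0,\infty)$, since an entire sphere cannot sit inside the complex hypersurface $\{q=0\}$ and $q(0)\neq 0$), but the simplest fix---and the one the paper takes---is to observe that an $\infty$-fold map of degree $d$ is in particular a $(d{+}1)$-fold map, so your own part~(ii) already forces it to be polynomial; the separate invariant-theory detour is then unnecessary.
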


The question then arises about the existence of other maps than the $\infty$-fold sphere maps.
For every $k$ and $m \geq k$, we will show that there exists a rational (nonpolynomial) $k$-fold sphere map of degree $m$
that is not a $(k+1)$-fold sphere map.
In particular, every first-degree rational proper map of the difference of
zero-centric balls is a unitary composed with an affine linear embedding; however, there exist nonpolynomial
second-degree rational maps of a difference of balls.
We will also show (Lemma~\ref{lem:rational-k-fold})
that the denominator of a $k$-fold sphere map
for $k > 1$
must be necessarily of a lower degree than the numerator, extending
the analogous result by D'Angelo (see e.g.~\cite{dangelo-2019-hermitian}*{Proposition~5.1})
for sphere maps that also fix the origin.

Finally, we consider proper maps of ball differences to ball
complements and vice versa.
Using the result of Forstneri\v{c}, one can prove that
when dimension is at least $2$, no proper holomorphic maps exist
between the two different sets.

\begin{proposition}
\label{prop:difference-complement}
Suppose $n \geq 2$,
$\bB_n \cap B_r(c) \not= \emptyset$,
and
$\bB_N \cap B_R(C) \not= \emptyset$.
There exist no proper holomorphic maps
$f \colon \bB_n \setminus \overline{B_r(c)} \to \C^N \setminus \overline{\bB_N}$ nor
$f \colon \C^n \setminus \overline{\bB_n} \to \bB_N \setminus \overline{B_R(C)}$.
\end{proposition}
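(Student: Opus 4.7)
The plan is to prove both nonexistence statements by contradiction. For the statement that no proper $f \colon \C^n \setminus \overline{\bB_n} \to \bB_N \setminus \overline{B_R(C)}$ exists, the argument is short: since $n \geq 2$ and $\overline{\bB_n}$ is compact, the Hartogs extension theorem extends any such $f$ to an entire map $\tilde f \colon \C^n \to \C^N$, which is bounded because $f$ takes values in the bounded set $\bB_N$ off of a compact set, hence constant by Liouville's theorem; a constant map is never proper, as the preimage of a point is the noncompact source.

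For the statement that no proper $f \colon \bB_n \setminus \overline{B_r(c)} \to \C^N \setminus \overline{\bB_N}$ exists, set $D = \bB_n \setminus \overline{B_r(c)}$ and use Forstneri\v{c}'s theorem twice. The first application is to extend $f$ to a rational map $\tilde f \colon \C^n \dashrightarrow \C^N$, following the strategy of the proof of \Cref{thm:difference}: one localizes at a regular point of the inner boundary cap $(\partial B_r(c)) \cap \bB_n$, uses properness to control $\norm{f}$ across that cap (in particular to force $\norm{f} \to 1$), and invokes Forstneri\v{c}'s theorem to obtain the rational structure. For the second application, choose a non-pole point $q \in \partial \bB_n \cap \overline{D}$; such a $q$ exists because $\partial \bB_n \setminus \overline{B_r(c)}$ is a nonempty open subset of $\partial \bB_n$ and the pole set of $\tilde f$ is a proper complex analytic subvariety. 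At $q$ the map $\tilde f$ is continuous, and properness of $f$ forces every finite boundary value to lie in $\partial \bB_N$, so $\norm{\tilde f(q)} = 1$; the same argument applies at every other non-pole point nearby, so $\tilde f$ sends a small neighborhood of $q$ on $\partial \bB_n$ into $S^{2N-1}$. A second application of Forstneri\v{c}'s theorem, this time at $q$ with interior side $\bB_n$, concludes that $\tilde f|_{\bB_n}$ is a proper holomorphic map $\bB_n \to \bB_N$. In particular $\tilde f(\bB_n) \subset \bB_N$, which contradicts $D \subset \bB_n$ together with $f(D) \subset \C^N \setminus \overline{\bB_N}$.

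The main obstacle is the rational-extension step. In the concentric case $c = 0$ with $r < 1$, or more generally whenever $\overline{B_r(c)} \subset \bB_n$, this step is immediate: Hartogs extension across the compact hole $\overline{B_r(c)}$ extends $f$ holomorphically to $\bB_n$; properness plus continuity then pin $\norm{\tilde f} \equiv 1$ on $\partial B_r(c)$, and Forstneri\v{c}'s theorem delivers rationality together with the conclusion $\tilde f(B_r(c)) \subset \bB_N$ (which would already contradict $\norm{f}>1$ on the shell, so no second Forstneri\v{c} application is needed). When $\overline{B_r(c)}$ protrudes out of $\bB_n$, the compact-hole Hartogs argument is unavailable, and one must work locally at the spherical cap $(\partial B_r(c)) \cap \bB_n$ using the boundary regularity techniques developed in the proof of \Cref{thm:difference}.
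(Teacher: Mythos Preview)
Your argument is correct and follows the same strategy as the paper: for the first nonexistence, extend across the pseudoconcave inner cap, apply Forstneri\v{c} to get rationality, then at a non-pole point of $S^{2n-1}$ apply Forstneri\v{c} again to force $\tilde f|_{\bB_n}$ to be a proper map into $\bB_N$, contradicting $f(D)\subset \C^N\setminus\overline{\bB_N}$; for the second, Hartogs plus Liouville.

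One small slip in your side remark on the compact-hole case $\overline{B_r(c)} \subset \bB_n$: the conclusion $\tilde f(B_r(c)) \subset \bB_N$ does \emph{not} by itself contradict $\norm{f}>1$ on the shell, since $B_r(c)$ and the shell $D=\bB_n\setminus\overline{B_r(c)}$ are disjoint sets (indeed, Lemma~\ref{lemma:outside} applied to the proper map $\tilde f\colon B_r(c)\to\bB_N$ yields exactly $\norm{\tilde f}>1$ outside $\overline{B_r(c)}$, which is consistent with the hypothesis). So even in this special case you still need the second Forstneri\v{c} application at $S^{2n-1}$, as in your main argument; the shortcut you suggest does not work.
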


If $n=1$,
start with a proper embedding of the disc into $\C^N$ (e.g. Remmert--Bishop--Narasimhan again).
We can construct
a nonrational proper map from $\bD \setminus \overline{r\bD}$ to $\C^N \setminus \overline{\bB_N}$:
Take a proper holomorphic embedding $f \colon \bD \to \C^N$.  Take a small closed ball $\overline{B} \subset \C^N$
so that $f^{-1}(\overline{B})$ is a connected set with more than one point
and $\bD \setminus f^{-1}(\overline{B})$ is connected.
Then it is classical that the doubly connected domain
$\bD \setminus f^{-1}(\overline{B})$ is biholomorphic
to an annulus $\bD \setminus \overline{r\bD}$ for some $0 < r < 1$.
Composition of the maps and taking $B$ to the unit ball obtains the desired map.
To construct the second map when $n=1$, $\C \setminus \overline{\bD}$ is biholomorphic
to the punctured disc $\bD^*$,
which can be properly embedded into
some $\bB_N \setminus \overline{B_R(C)}$ in many ways
(e.g., linearly).

Interestingly, it is not difficult to construct many
nontrivial proper maps of $\bB_n$ to
$\C^N \setminus \overline{\bB_N}$ when $N > n$,
but the proposition says that if $n \geq 2$,
there is no way to properly map the annulus to
the complement of the ball.

The organization of this paper is the following.
In section~\ref{sec:ballcomplements}, we study proper maps
of ball complements and prove Theorem~\ref{thm:complements}.
In section~\ref{sec:mfoldmaps}, we study the rational $m$-fold
sphere maps and prove Theorem~\ref{thm:mfoldmaps}.
Finally, in section~\ref{sec:properballdiffs}, we study proper mappings
of ball differences and we prove Theorem~\ref{thm:difference}.


\section{Proper mappings of ball complements}
\label{sec:ballcomplements}

We start with a lemma about where
rational proper holomorphic maps of balls take the complement of the ball.
We remark that unlike many of the results we consider,
this lemma still holds in $n=1$.

\begin{lemma} \label{lemma:outside}
If $f \colon \C^n \dashrightarrow \C^N$ is a rational map such that
the restriction of $f$ to $\bB_n$ is a proper map to $\bB_N$, then
$\norm{f(z)} > 1$
for every $z \notin \overline{\bB_n}$ that is not a pole of $f$.
\end{lemma}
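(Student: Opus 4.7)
The plan is to use the inversion $z \mapsto z^{*} := z/\norm{z}^{2}$, which swaps the interior and exterior of the unit sphere and fixes $S^{2n-1}$ pointwise. For $z_0 \notin \overline{\bB_n}$ that is not a pole of $f$, the reflected point $z_0^{*}$ lies in $\bB_n$, so $f(z_0^{*})$ is defined and satisfies $\norm{f(z_0^{*})} < 1$. The key identity I aim to establish is
\[
\sum_{i=1}^{N} f_i(z_0)\, \overline{f_i(z_0^{*})} = 1,
\]
from which the Cauchy--Schwarz inequality immediately gives $1 \le \norm{f(z_0)}\,\norm{f(z_0^{*})} < \norm{f(z_0)}$.

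To prove the identity, I would write $f = p/q$ with $p \colon \C^n \to \C^N$ and $q \colon \C^n \to \C$ polynomial, chosen so that $q$ does not vanish on $\overline{\bB_n} \cup \{z_0\}$; this is possible because $f|_{\bB_n}$ is holomorphic (proper to $\bB_N$) and $z_0$ is not a pole. Properness together with continuity on $\overline{\bB_n}$ forces $\norm{f} \equiv 1$ on $S^{2n-1}$, so the real polynomial $\norm{p(z)}^{2} - \abs{q(z)}^{2}$ vanishes on the unit sphere. By the standard fact that the real vanishing ideal of the sphere in $\C[z, \bar z]$ is generated by $\norm{z}^{2} - 1$,
\[
\norm{p(z)}^{2} - \abs{q(z)}^{2} = (\norm{z}^{2} - 1)\, h(z, \bar z)
\]
for some real polynomial $h$.

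The next step is polarization. Letting $\bar p_i$ and $\bar q$ denote the polynomials obtained by conjugating the coefficients of $p_i$ and $q$, define
\[
\phi(z, w) := \sum_{i=1}^{N} p_i(z)\, \bar p_i(w) \, - \, q(z)\, \bar q(w) \in \C[z, w].
\]
Writing $h(z, \bar z) = \sum c_{\alpha\beta}\, z^{\alpha}\, \bar z^{\beta}$ and setting $H(z, w) := \sum c_{\alpha\beta}\, z^{\alpha}\, w^{\beta}$, the difference $\phi(z, w) - \bigl(\sum_i z_i w_i - 1\bigr) H(z, w)$ lies in $\C[z, w]$ and vanishes on the totally real subset $\{w = \bar z\}$ of $\C^{2n}$, hence vanishes identically. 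Evaluating at $w = \bar z_0^{*}$ and using $\sum_i z_{0,i}\, \bar z_{0,i}^{*} = 1$ makes the right-hand side zero; unpacking then yields $\sum_i p_i(z_0)\, \overline{p_i(z_0^{*})} = q(z_0)\, \overline{q(z_0^{*})}$, and dividing by the (nonzero) right-hand side produces the required identity.

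The main subtlety is the polarization step, which turns the real identity on $S^{2n-1}$ into a bilinear algebraic identity holding on the complex hypersurface $\sum z_i w_i = 1$. Once that is in hand, the reflection $z_0 \mapsto z_0^{*}$, the containment $z_0^{*} \in \bB_n$, and Cauchy--Schwarz combine to close the proof. The argument is essentially the multivariable analogue of the observation that any finite Blaschke product $B$ satisfies $B(z)\, \overline{B(1/\bar z)} = 1$ and so $\abs{B(z)} > 1$ outside the disc.
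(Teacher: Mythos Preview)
Your proposal is correct and follows essentially the same route as the paper: factor $\norm{p}^{2}-\abs{q}^{2}=(\norm{z}^{2}-1)h$, polarize to obtain $f(z)\cdot\overline{f(w)}-1=\dfrac{Q(z,\bar w)}{q(z)\overline{q(w)}}\,(z\cdot\bar w-1)$, evaluate at the reflection $w=z/\norm{z}^{2}$ to get $f(z)\cdot\overline{f(z^{*})}=1$, and finish with Cauchy--Schwarz using $\norm{f(z^{*})}<1$. Your write-up spells out the polarization/complexification step more explicitly and adds the Blaschke-product analogy, but the argument is the same.
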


\begin{proof}
Write $f=\frac{p}{q}$ for polynomials $p$ and $q$.
As $f$ takes the sphere $S^{2n-1}$ to the sphere $S^{2N-1}$,
there is a real polynomial $Q(z, \bar{z})$ such that
    \[
        \norm{p(z)}^2 - \norm{q(z)}^2
        = Q(z, \bar{z}) \bigl(\norm{z}^2 - 1\bigr)
        \quad
        \text{or}
        \quad
        \norm{f(z)}^2 - 1
        = \frac{Q(z, \bar{z})}{\abs{q(z)}^2} \bigl(\norm{z}^2 - 1\bigr)
    \]
    outside the set where $q=0$.
Polarization gives us
    \[
        f(z) \cdot \overline{f(w)} - 1
        = \frac{Q(z, \bar{w})}{q(z)\bar{q}(\bar{w})} \bigl(z \cdot \bar{w} - 1\bigr) ,
    \]
    where $z \cdot w = z_1 w_1 +\cdots+z_nw_n$ is the standard symmetric dot product.

    Suppose $z \notin \overline{\B_n}$ is not a pole of $f$.
    Set $w = \frac{z}{\norm{z}^2}$ so that $0 < \norm{w} = \frac{1}{\norm{z}} < 1$.  
    As $f$ has no poles in the ball, $g(w)\not= 0$.
    Hence $\norm{f(w)} < 1$ and $z \cdot \bar{w} = 1$.
    Then
    \begin{equation}
    \label{eq:reflection}
        f(z) \cdot \overline{f\left(\frac{z}{\norm{z}^2}\right)} = 1.
    \end{equation}
    Cauchy-Schwarz inequality gives
    \[
        1 
        = f(z) \cdot \overline{f(w)}
        \leq \norm{f(z)} \norm{f(w)}
        < \norm{f(z)}.
        \qedhere
    \]
\end{proof}

The lemma proves the converse statement
of Theorem~\ref{thm:complements}.  That is, if $f \colon \C^n \to \C^N$ is a polynomial
map such that its restriction to $\bB_n$ is a proper map to $\bB_N$,
it takes the sphere $S^{2n-1}$ to the sphere $S^{2N-1}$, and 
the lemma says that it takes
$\C^n \setminus \overline{\bB_n}$ to
$\C^N \setminus \overline{\bB_N}$.  
If we furthermore assume that $\norm{f(z)} \to \infty$
as $\norm{z} \to \infty$, we find that $f$ is proper.

It is not clear if the norm of a polynomial proper of balls
always goes to infinity at infinity
thereby giving a proper map of
$\C^n \setminus \overline{\bB_n}$ to
$\C^N \setminus \overline{\bB_N}$.
We provide a proof in some natural special cases.

\begin{proposition}
\pagebreak[2]
Suppose $p \colon \C^n \to \C^N$ is a polynomial which is also a proper map of
$\bB_n$ to $\bB_N$.  Suppose that
\begin{enumerate}
        \item $p(0)=0$, or
        \item $p = p_0 + p_1 + \cdots +p_d$ is the decomposition into homogeneous parts and
        $p_d$ is not zero on the unit sphere.
\end{enumerate}
Then $\norm{p(z)} \to \infty$ as $\norm{z}\to\infty$.
\end{proposition}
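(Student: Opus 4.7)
The plan is to handle the two cases separately, using only elementary arguments once we have the right tool in each.

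For case (ii), I would observe that if $p = p_0 + p_1 + \cdots + p_d$ is the homogeneous decomposition and $p_d$ does not vanish on $S^{2n-1}$, then by continuity and compactness there is a constant $c > 0$ such that $\norm{p_d(\zeta)} \geq c$ for every $\zeta \in S^{2n-1}$. Writing $z = r\zeta$ with $r = \norm{z}$ and using homogeneity, $\norm{p_d(z)} = r^d \norm{p_d(\zeta)} \geq c r^d$, while the lower-order terms satisfy $\norm{p_0 + \cdots + p_{d-1}(z)} \leq C r^{d-1}$ for a constant $C$ once $r \geq 1$. The reverse triangle inequality then gives $\norm{p(z)} \geq c r^d - C r^{d-1} \to \infty$. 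This is routine and should be essentially a one-line argument.

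For case (i), the more interesting case, I would use the reflection identity \eqref{eq:reflection} established in the proof of Lemma~\ref{lemma:outside}. Given $z \notin \overline{\bB_n}$, set $w = z/\norm{z}^2 \in \bB_n \setminus \{0\}$. Since a polynomial has no poles, the polarized identity gives
\[
p(z) \cdot \overline{p(w)} = 1,
\]
and so by Cauchy--Schwarz,
\[
\norm{p(z)} \geq \frac{1}{\norm{p(w)}}.
\]
Now the hypothesis $p(0) = 0$ kicks in: as $\norm{z} \to \infty$, we have $w \to 0$ and hence $p(w) \to p(0) = 0$ by continuity of the polynomial $p$. Therefore $1/\norm{p(w)} \to \infty$, which forces $\norm{p(z)} \to \infty$.

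There is no real obstacle in either case; the only subtlety is recognizing that case (i) is a direct consequence of the reflection identity already available from Lemma~\ref{lemma:outside}, so it is worth emphasizing in the write-up why the hypothesis $p(0)=0$ is precisely what converts the inequality $\norm{p(z)} > 1$ of that lemma into the stronger divergence $\norm{p(z)} \to \infty$. (Note that if $p(0) \neq 0$, the same reasoning only yields the bounded lower bound $\norm{p(z)} \geq 1/\norm{p(0)} + o(1)$, which explains why some additional hypothesis is needed in general.)
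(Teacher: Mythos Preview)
Your proposal is correct and follows essentially the same approach as the paper: case (ii) is the standard compactness-plus-homogeneity estimate, and case (i) uses the reflection identity \eqref{eq:reflection} together with $p(0)=0$. The only cosmetic difference is that the paper phrases case (i) as a contradiction (assume $p(z_k)\to L$ along a sequence with $\norm{z_k}\to\infty$, then $1 = L\cdot\overline{p(0)} = 0$), whereas you argue directly via Cauchy--Schwarz to get $\norm{p(z)} \geq 1/\norm{p(z/\norm{z}^2)} \to \infty$; the underlying idea is identical.
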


\begin{proof}
Suppose first that $p(0)=0$ and
suppose for contradiction that the conclusion does not hold.
Then without loss of generality,
there is a sequence $(z_k) \subset \C^n$ such that
$\norm{z_k} \to \infty$ and $p(z_k) \to L \in \C^N$.
Letting $k \to \infty$ in the reflection principle~\cref{eq:reflection} with $z = z_k$, we get 
$1 = L \cdot \overline{p(0)} = L \cdot \overline{0} = 0$, a contradiction.

Next, suppose that $p_d$ is not zero on the unit sphere.  Let $C > 0$ be a lower bound for
$\norm{p_d(z)}$ for $z \in S^{2n-1}$.  Writing any $z \in \C^n$ as $ru$ with $r \geq 0$ and
$u \in S^{2n-1}$, we have
\[
\norm{p_d(z)}=r^d \norm{p_d(u)} \geq C r^d .
\]
As $\norm{p_d(z)}^2$ is the top degree homogeneous
part of $\norm{p(z)}^2$, we find that $\norm{p(z)} \to \infty$ as $\norm{z} \to \infty$.
\end{proof}

We remark that the conclusion of the proposition also follows if
the polynomial proper map of balls is constructed using tensoring
only starting with the identity in the procedure of
D'Angelo~\cite{dangelo-1988-proper}.

The rest of Theorem~\ref{thm:complements} follows from the following lemma.

\begin{lemma}
\label{lem:proper-outside}
Suppose $f \colon \C^n \setminus \overline{\bB_n} \to \C^N \setminus \overline{\bB_N}$,
$n \geq 2$, is a proper holomorphic map.
Then $f$ is a polynomial map, and when this polynomial is restricted to
$\bB_n$, it gives a proper map to $\bB_N$.
\end{lemma}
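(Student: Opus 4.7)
The plan is to extend $f$ via the Hartogs phenomenon to an entire map of $\C^n$, and then invoke Forstneri\v{c}'s theorem from the introduction to show that this extension is polynomial and gives a proper map of balls when restricted to $\bB_n$. First, since $n \geq 2$ and $\overline{\bB_n}$ is a compact subset of $\C^n$ whose complement is connected, the Hartogs extension theorem applied componentwise produces a unique holomorphic extension $\tilde f \colon \C^n \to \C^N$ that agrees with $f$ on $\C^n \setminus \overline{\bB_n}$. In particular $\tilde f$ is $C^\infty$ across the sphere $S^{2n-1}$.

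Next, I would use properness to identify the boundary behavior of $\tilde f$. Fix $z_0 \in S^{2n-1}$ and a sequence $z_k \in \C^n \setminus \overline{\bB_n}$ with $z_k \to z_0$. Continuity of $\tilde f$ forces $f(z_k) = \tilde f(z_k) \to \tilde f(z_0) \in \C^N$, while properness of $f$ forces $f(z_k)$ to leave every compact subset of $\C^N \setminus \overline{\bB_N}$, so the finite limit $\tilde f(z_0)$ must lie in $\overline{\bB_N}$. If $\norm{\tilde f(z_0)} < 1$, continuity would place $\tilde f$ inside $\bB_N$ on a neighborhood of $z_0$, contradicting $f(\C^n \setminus \overline{\bB_n}) \subset \C^N \setminus \overline{\bB_N}$. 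Hence $\tilde f(S^{2n-1}) \subset S^{2N-1}$. Forstneri\v{c}'s theorem then applies to $\tilde f$ on a neighborhood of any sphere point (smooth on the closed half-ball, holomorphic in the interior, sphere into sphere) and certifies that $\tilde f$ is rational; since $\tilde f$ is entire, the denominator in lowest terms is a nowhere-vanishing polynomial and must be constant, so $\tilde f$ is a polynomial.

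Finally, I would show $\tilde f|_{\bB_n}$ is a proper map to $\bB_N$. Since $\norm{\tilde f}^2$ is plurisubharmonic on $\overline{\bB_n}$ and equals $1$ on $S^{2n-1}$, the maximum principle gives $\norm{\tilde f} \leq 1$ there. Equality at an interior point would by the strong maximum principle force $\norm{\tilde f} \equiv 1$ on $\bB_n$; a short computation of $\partial \bar\partial \norm{\tilde f}^2 = 0$ shows the complex Jacobian of $\tilde f$ vanishes identically, so $\tilde f$ would be constant, which is precluded by properness of $f$ on the noncompact set $\C^n \setminus \overline{\bB_n}$. Thus $\tilde f(\bB_n) \subset \bB_N$, and continuity of $\tilde f$ on the closed ball carrying $S^{2n-1}$ into $S^{2N-1}$ delivers properness of the restriction.

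The main obstacle is the second step: ruling out the possibility that $f$ sends the inner sphere end of its source to infinity in the target rather than to the target sphere. The Hartogs extension does the heavy lifting here, because it immediately supplies a finite value of $\tilde f$ at each sphere point, which is incompatible with $\norm{f(z)} \to \infty$ as $z$ approaches that point from outside. Once that matching of ends is in place, Forstneri\v{c}'s rationality theorem and the strong maximum principle dispose of the rest with little additional effort.
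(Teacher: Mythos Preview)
Your proof is correct and follows essentially the same route as the paper: Hartogs extension, properness forcing $\tilde f(S^{2n-1})\subset S^{2N-1}$, Forstneri\v{c}'s theorem for rationality, and entireness forcing the denominator constant. Your maximum-principle argument in the last step is more explicit than the paper's one-line ``therefore the map when restricted to $\bB_n$ gives a proper map to $\bB_N$''; note also that Forstneri\v{c}'s theorem as quoted in the introduction already yields the proper-map-of-balls conclusion directly, so that step could be shortened.
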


\begin{proof}
The Hartogs phenomenon says that $f$ extends to a holomorphic
map of $\C^n$ to $\C^N$.
By the properness of $f$, we have
$f(S^{2n-1}) \subset S^{2N-1}$ and therefore the map when restricted to
$\bB_n$ gives a proper map to $\bB_N$.  The theorem of Forstneri\v{c}
says that $f$ is rational.  As $f$ is holomorphic on $\C^n$, it must necessarily be polynomial.
\end{proof}


\section{\texorpdfstring{$m$}{m}-fold sphere maps}
\label{sec:mfoldmaps}

This section is split into two parts.
In the first part,
we consider polynomial $m$-fold sphere maps,
proving \cref{thm:mfoldmaps} for polynomials.
We then give a construction
of maps of higher degree showing that the bound is sharp.
In the second part of the section, we extend
the results to rational maps and give the construction showing that
the result is sharp also in the rational case.


\subsection{Polynomial \texorpdfstring{$m$}{m}-fold sphere maps}

We remark that unlike many of the results we consider and
except for \cref{prop:b1,thm:poly-only-k-fold},
the results in this section still hold in $n=1$.

\begin{definition}
For $k \geq 1$, distinct $r_j$s and $r_j, R_j > 0$ for $j = 1, \dots, k$,
we define the \emph{divided differences} as
\begin{align*}
    [R_j^2] &= R_j^2, \quad j = 1, \dots, k, \\
    [R_1^2, \dots, R_j^2, R_\ell^2]
    &= \frac{[R_1^2, \dots, R_{j-1}^2, R_\ell^2] - [R_1^2, \dots, R_{j-1}^2, R_j^2]}{r_\ell^2 - r_j^2},  \\
    & \qquad \qquad \ell = j + 1, \dots, k, \quad j = 1, \dots, k - 1.
\end{align*}
For $j = 1, \dots, k-1$,
we write
\[
    b_j = [R_1^2, \dots, R_j^2, R_{j+1}^2]
\]
and define the degree-$j$ \emph{Newton polynomial} of the indeterminate $x$ as the real polynomial
\[
    b_0 + b_1 (x - r_1^2) + \dots + b_j (x - r_1^2) \dots (x - r_j^2).
\]
\end{definition}

\begin{proposition}
\label{prop:b1}
    Let $k \geq 2$, $n \geq 2$ and 
    $f \colon \C^n \dashrightarrow \C^N$ be a nonconstant rational $k$-fold sphere map, that is,
    it takes $r_j$-spheres to $R_j$-spheres, where all $r_j$s are distinct and $r_j, R_j > 0$ for $j = 1, \dots, k$.
    Then $r_j < r_\ell$ implies $R_j < R_\ell$, and in particular, the divided differences $b_0$ and $b_1$ are positive.
\end{proposition}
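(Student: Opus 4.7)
The plan is to show that when $n \geq 2$ the pole set of $f$ lies strictly outside the largest ball $\overline{r^* \bB_n}$, where $r^* := \max_j r_j$, and then to apply the plurisubharmonic maximum principle to $\|f\|^2$ on that ball. The main obstacle, and the place where the hypothesis $n \geq 2$ is essential, is precisely this pole control; in one variable the monotonicity can fail, as $z \mapsto 1/z$ shows.

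First, write $f = p/q$ in lowest terms and let $V = \{q = 0\} \subset \C^n$ be the pole set. Each irreducible component of $V$ is cut out by an irreducible factor of $q$, and since $n \geq 2$, it is a closed complex hypersurface of dimension $n - 1 \geq 1$. A standard consequence of the maximum principle is that any positive-dimensional closed complex analytic subset of $\C^n$ must be unbounded: otherwise it would be compact, and coordinate functions would attain their maximum modulus and hence be constant on each connected component, forcing it to be a single point. By the definition of a $k$-fold sphere map, $V$ is disjoint from $r^* S^{2n-1}$, so each irreducible component of $V$, being connected, lies entirely in $\{\|z\| < r^*\}$ or in $\{\|z\| > r^*\}$; unboundedness rules out the former. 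Therefore $V \cap \overline{r^* \bB_n} = \emptyset$, and $f$ is holomorphic on this closed ball.

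Next, fix any indices with $r_j < r_\ell$, so that $\overline{r_\ell \bB_n} \subset \overline{r^* \bB_n}$. The function $\|f\|^2$ is plurisubharmonic and continuous on $\overline{r_\ell \bB_n}$ with boundary value $R_\ell^2$, so the maximum principle gives $\|f\|^2 \leq R_\ell^2$ throughout; evaluating on the interior sphere $r_j S^{2n-1}$ yields $R_j \leq R_\ell$. If equality held, the strong maximum principle for plurisubharmonic functions would force $\|f\|^2 \equiv R_\ell^2$ on the open ball $r_\ell \bB_n$. A Cauchy--Schwarz argument in the spirit of the proof of \cref{lemma:outside}, applied to the holomorphic function $z \mapsto \inner{f(z), f(z_0)}$ for a fixed $z_0$, then shows that any holomorphic map into $\C^N$ of constant positive norm on a connected open set is constant; since $f$ is rational, it would then be constant on $\C^n$, contradicting the nonconstancy hypothesis. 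Hence $R_j < R_\ell$. Finally, $b_0 = R_1^2 > 0$ is immediate, and $b_1 = (R_2^2 - R_1^2)/(r_2^2 - r_1^2)$ has numerator and denominator of the same sign by the monotonicity just established, so $b_1 > 0$.
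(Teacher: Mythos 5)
Your proof is correct, but it takes a more self-contained route than the paper. The paper's argument is a three-line reduction: a nonconstant rational map carrying the $r_\ell$-sphere to the $R_\ell$-sphere is a rational sphere map, hence (by the standard theory) a rational \emph{proper} map of $r_\ell\B_n$ to $R_\ell\B_N$, so it sends the interior sphere $r_jS^{2n-1}$ into the open ball $R_\ell\B_N$, giving $R_j < R_\ell$ at once. You instead unpack that black box: you first establish pole control on $\overline{r^*\B_n}$ via the unboundedness of positive-dimensional analytic hypersurfaces in $\C^n$ (which is exactly what justifies the parenthetical ``and therefore $r_jS^{2n-1}$ for all $j$'' in the paper's definition of $m$-fold sphere map, and is where $n\geq 2$ enters), then get $R_j\leq R_\ell$ from the plurisubharmonic maximum principle, and finally get strictness from the strong maximum principle combined with the Cauchy--Schwarz rigidity argument showing that a holomorphic map of constant positive norm on a connected open set is constant. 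All of these steps are sound (the function $z\mapsto \langle f(z),f(z_0)\rangle$ is indeed holomorphic and attains its maximum modulus $R_\ell^2$ at $z_0$, forcing $f\equiv f(z_0)$). What the paper's approach buys is brevity by leaning on the known equivalence between nonconstant rational sphere maps and proper maps of balls; what yours buys is a proof from first principles that makes the role of the hypotheses ($n\geq 2$ for pole control, nonconstancy for strictness) completely explicit, and it correctly identifies the one-variable counterexample $z\mapsto 1/z$ as the obstruction your pole-control step removes.
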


\begin{proof}
    We notice that 
    \[
        b_0 = [R_1^2] = R_1^2 > 0.
    \]

    Suppose that $r_j < r_\ell$.
    As $f$ is a nonconstant rational map that takes $r_\ell$-sphere to $R_\ell$-sphere,
    it is a rational sphere map and hence a rational proper map 
    of $r_\ell$-ball to $R_\ell$-ball.
    As $r_j < r_\ell$, $f(r_j S^{2n-1}) \subset R_\ell \B_N$, so that $R_j < R_\ell$.

    In particular,
    \[
        b_1 = [R_1^2, R_2^2] = \frac{R_2^2 - R_1^2}{r_2^2 - r_1^2} > 0.
        \qedhere
    \]
\end{proof}

In the following few results, we will obtain convenient expressions of $\norm{p(z)}^2$ reminiscent of a Newton polynomial of indeterminate $\norm{z}^2$ for the polynomial $m$-fold sphere map $p$.
By bidegree of a polynomial $p(z,\bar{z})$, we mean a pair $(m,k)$
where $m$ is the degree in $z$ and $k$ is the degree in $\bar{z}$.

\begin{lemma}
\label{lem:poly-k-fold}
    Let $1 \leq k \leq m$ and 
    $p \colon \C^n \to \C^N$ be a polynomial $k$-fold sphere map of degree $m$, that is,
    it takes $r_j$-spheres to $R_j$-spheres, where all $r_j$s are distinct and $r_j, R_j > 0$ for $j = 1, \dots, k$.
    Let $Q_0(z, \bar{z}) = \norm{p(z)}^2$.
    Then for $j = 1, \dots, k$,
    there is a real polynomial $Q_j(z, \bar{z})$ of bidegree $(m-j, m-j)$ such that
    \[
        Q_{j-1}(z, \bar{z}) - Q_{j-1}(z, \bar{z})\Big|_{\norm{z} = r_j}
        = Q_j(z, \bar{z}) \bigl(\norm{z}^2 - r_j^2\bigr)
    \]
    which becomes
    \[
        Q_{j-1}(z, \bar{z}) - b_{j-1}
        = Q_j(z, \bar{z}) \bigl(\norm{z}^2 - r_j^2\bigr),
    \]
    and for $\ell = j + 1, \dots, k$, $Q_j(z, \bar{z})$ is constant on
    the $r_\ell$-sphere and equals $[R_1^2, \dots, R_j^2, R_\ell^2]$.
\end{lemma}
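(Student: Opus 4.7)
The plan is to proceed by induction on $j$, the substantive content being a polynomial divisibility step that is standard once the inductive data is in place.

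For the base case $j=0$, take $Q_0(z,\bar z) = \norm{p(z)}^2$, a real polynomial of bidegree $(m,m)$ since $p$ has degree $m$. The $k$-fold sphere hypothesis says precisely that $Q_0 \equiv R_\ell^2 = [R_\ell^2]$ on the $r_\ell$-sphere for each $\ell = 1, \dots, k$, establishing the claim at $j=0$.

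For the inductive step, assume $Q_{j-1}$ of bidegree $(m-j+1, m-j+1)$ has been built and is constant on $\norm{z} = r_\ell$ with value $[R_1^2, \dots, R_{j-1}^2, R_\ell^2]$ for $\ell = j, \dots, k$. Specializing to $\ell = j$ gives $Q_{j-1}|_{\norm{z}=r_j} = b_{j-1}$, so $Q_{j-1}(z,\bar z) - b_{j-1}$ vanishes on the sphere $\norm{z}=r_j$. The key step is a standard divisibility argument, of the same flavor already used implicitly in Lemma~\ref{lemma:outside}, yielding a real polynomial $Q_j$ with
\[
Q_{j-1}(z,\bar z) - b_{j-1} = (\norm{z}^2 - r_j^2)\, Q_j(z,\bar z).
\]
One justifies this by noting that $\norm{z}^2 - r_j^2$ is irreducible in the real polynomial ring in the real and imaginary parts of $z$, with real zero set Zariski-dense in its complexification; the Nullstellensatz applied to the complexification then forces divisibility, and Hermitian symmetry transfers back to give a real $Q_j$. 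A bidegree count shows $Q_j$ has bidegree at most $(m-j, m-j)$, since dividing a bidegree $(m-j+1,m-j+1)$ polynomial by a bidegree $(1,1)$ polynomial cannot increase the bidegree beyond this bound.

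Finally, on the sphere $\norm{z} = r_\ell$ for $\ell > j$, the inductive hypothesis gives that the numerator and denominator of $Q_j = \frac{Q_{j-1} - b_{j-1}}{\norm{z}^2 - r_j^2}$ become constants, so
\[
Q_j(z,\bar z)\big|_{\norm{z}=r_\ell} = \frac{[R_1^2, \dots, R_{j-1}^2, R_\ell^2] - [R_1^2, \dots, R_{j-1}^2, R_j^2]}{r_\ell^2 - r_j^2} = [R_1^2, \dots, R_j^2, R_\ell^2]
\]
by the recursive definition of divided differences, closing the induction. The main point requiring care is the divisibility claim: that a real Hermitian polynomial vanishing on the sphere factors out its defining function $\norm{z}^2 - r_j^2$. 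Once this is established, everything else is direct bookkeeping against the recursion for divided differences.
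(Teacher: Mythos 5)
Your proof is correct and follows essentially the same route as the paper's: induction on $j$, with divisibility of $Q_{j-1}-b_{j-1}$ by $\norm{z}^2-r_j^2$ as the key step and the divided-difference recursion closing the induction; the paper simply asserts that divisibility, whereas you justify it via irreducibility of the quadric and Zariski density of its real points, which is a valid (and welcome) addition. The only discrepancy is that you conclude bidegree \emph{at most} $(m-j,m-j)$ where the lemma asserts equality, but equality is immediate because the polynomial ring is a domain, so the bidegree of the product $Q_j(z,\bar z)\bigl(\norm{z}^2-r_j^2\bigr)=Q_{j-1}(z,\bar z)-b_{j-1}$, which is exactly $(m-j+1,m-j+1)$, forces $Q_j$ to have bidegree exactly $(m-j,m-j)$; in any case the weaker bound is all that the subsequent results use.
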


\begin{proof}
We prove the result by induction on $j$.
For $j = 1$,
on the $r_1$-sphere, 
$Q_0(z, \bar{z}) = \norm{p(z)}^2$ equals $R_1^2 = [R_1^2] = b_0$ and so is constant on the $r_1$-sphere.
Thus there is a real polynomial $Q_1(z, \bar{z})$ such that
\[
    Q_0(z, \bar{z}) - Q_0(z, \bar{z})\Big|_{\norm{z} = r_1}
    = Q_1(z, \bar{z}) \bigl(\norm{z}^2 - r_1^2\bigr),
\]
that is,
\[
    Q_0(z, \bar{z}) - b_0
    = Q_1(z, \bar{z}) \bigl(\norm{z}^2 - r_1^2\bigr).
\]
As $Q_0(z, \bar{z}) = \norm{p(z)}^2$ is of bidegree $(m, m)$ and $\norm{z}^2$ is of bidegree $(1, 1)$, $Q_1(z, \bar{z})$ is of bidegree $(m-1, m-1)$.
As for $\ell = 2, \dots, k$, $Q_0(z, \bar{z})$ is constant and equals $R_\ell^2 = [R_\ell^2]$ on
the $r_\ell$-sphere, $Q_1(z, \bar{z})$ is constant on the $r_\ell$-sphere and equals
\[
    \frac{[R_\ell^2] - [R_1^2]}{r_\ell^2 - r_1^2}
    = [R_1^2, R_\ell^2].
\]

Suppose that for some $j$, $1 \leq j < k$, 
there is a real polynomial $Q_j(z, \bar{z})$ such that
$Q_j(z, \bar{z})$ is of bidegree $(m-j, m-j)$, and for $\ell = j + 1, \dots, k$, $Q_j(z, \bar{z})$ is constant on
the $r_\ell$-sphere and equals $[R_1^2, \dots, R_j^2, R_\ell^2]$.

Now, $Q_j(z, \bar{z})$ is constant on the $r_{j+1}$-sphere.
Thus there is a real polynomial $Q_{j+1}(z, \bar{z})$ such that
\[
    Q_j(z, \bar{z}) - Q_j(z, \bar{z})\Big|_{\norm{z} = r_{j+1}}
    = Q_{j+1}(z, \bar{z}) \bigl(\norm{z}^2 - r_{j+1}^2\bigr).
\]
As $Q_j(z, \bar{z})$ is of bidegree $(m-j, m-j)$ and $\norm{z}^2$ is of bidegree $(1, 1)$, $Q_{j+1}(z, \bar{z})$ is of bidegree $(m-j-1, m-j-1)$.
As for $\ell = j + 2, \dots, k$, $Q_j(z, \bar{z})$ is constant on
the $r_\ell$-sphere and equals $[R_1^2, \dots, R_j^2, R_\ell^2]$, $Q_{j+1}(z, \bar{z})$ is constant on the $r_\ell$-sphere and equals
\[
    \frac{[R_1^2, \dots, R_j^2, R_\ell^2] - [R_1^2, \dots, R_j^2, R_{j+1}^2]}{r_\ell^2 - r_{j+1}^2}
    = [R_1^2, \dots, R_{j+1}^2, R_\ell^2].
\]
Moreover, 
$Q_j(z, \bar{z})\big|_{\norm{z} = r_{j+1}} = [R_1^2, \dots, R_j^2, R_{j+1}^2] = b_j$,
that is,
\[
    Q_j(z, \bar{z}) - b_j
    = Q_{j+1}(z, \bar{z}) \bigl(\norm{z}^2 - r_{j+1}^2\bigr).
\]
The result then follows by induction.
\end{proof}

\begin{lemma}
\label{lem:poly-Newton}
    Let $1 \leq k \leq m$ and $p \colon \C^n \to \C^N$ be a polynomial $k$-fold sphere map of degree $m$, that is, 
    it takes $r_j$-spheres to $R_j$-spheres, where all $r_j$s are distinct and $r_j, R_j > 0$ for $j = 1, \dots, k$.
    Then 
\begin{align*}
    \norm{p(z)}^2
    &= b_0 + b_1 \bigl(\norm{z}^2-r_1^2\bigr) + b_2 \bigl(\norm{z}^2-r_1^2\bigr) \bigl(\norm{z}^2-r_2^2\bigr) + \dots \\
    &\phantom{={}} + b_{k-1} \bigl(\norm{z}^2-r_1^2\bigr) \dots \bigl(\norm{z}^2-r_{k-1}^2\bigr) 
    + Q_k(z, \bar{z}) \bigl(\norm{z}^2-r_1^2\bigr) \dots \bigl(\norm{z}^2-r_k^2\bigr)
\end{align*}
    for a real polynomial $Q_k(z, \bar{z})$ of bidegree $(m-k, m-k)$.
    In other words,
    $\norm{p(z)}^2$ can be written as a Newton polynomial of $\norm{z}^2$ with the leading coefficient replaced by
    a bidegree-$(m-k, m-k)$ real polynomial.
\end{lemma}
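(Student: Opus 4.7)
The plan is to unroll the recursion already set up in \cref{lem:poly-k-fold} and telescope. Since \cref{lem:poly-k-fold} gives us, for each $j = 1, \dots, k$, the identity
\[
    Q_{j-1}(z, \bar{z}) - b_{j-1} = Q_j(z, \bar{z}) \bigl(\norm{z}^2 - r_j^2\bigr),
\]
with $Q_0(z,\bar{z}) = \norm{p(z)}^2$ and $Q_j$ of bidegree $(m-j, m-j)$, the entire lemma follows purely formally from substitution. There is essentially no new analytic content; everything has been done in the preceding lemma.

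I would proceed by induction on $j$, proving that for each $j = 1, \dots, k$,
\[
    \norm{p(z)}^2 = b_0 + b_1 \bigl(\norm{z}^2 - r_1^2\bigr) + \dots + b_{j-1} \prod_{i=1}^{j-1} \bigl(\norm{z}^2 - r_i^2\bigr) + Q_j(z, \bar{z}) \prod_{i=1}^{j} \bigl(\norm{z}^2 - r_i^2\bigr).
\]
The base case $j = 1$ is simply $\norm{p(z)}^2 = Q_0 = b_0 + Q_1(z,\bar{z})(\norm{z}^2 - r_1^2)$, which is immediate from the $j=1$ case of \cref{lem:poly-k-fold}. For the inductive step, assume the formula holds for some $j < k$; substituting $Q_j(z,\bar{z}) = b_j + Q_{j+1}(z,\bar{z})(\norm{z}^2 - r_{j+1}^2)$ from \cref{lem:poly-k-fold} into the last term yields the formula for $j+1$. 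Taking $j = k$ gives the desired expression, and the bidegree $(m-k, m-k)$ of $Q_k$ is exactly what \cref{lem:poly-k-fold} already records.

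The main (and really only) obstacle is making sure the bookkeeping is transparent: one must check that the polynomial $Q_k$ is genuinely real (which it is, since each $Q_j$ is real by \cref{lem:poly-k-fold}) and that its bidegree is as claimed (again given by \cref{lem:poly-k-fold}). Since the hypothesis $k \leq m$ ensures $m - k \geq 0$ so that a bidegree-$(m-k, m-k)$ polynomial makes sense, there are no edge cases to worry about. I do not foresee any difficulty beyond writing out the inductive step carefully.
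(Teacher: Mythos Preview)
Your proposal is correct and matches the paper's own proof essentially line for line: the paper also invokes \cref{lem:poly-k-fold} to obtain $Q_{j-1} - b_{j-1} = Q_j(\norm{z}^2 - r_j^2)$ for each $j$ and then substitutes inductively to arrive at the Newton form, with the bidegree of $Q_k$ read off directly from that lemma.
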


\begin{proof}
Let $Q_0(z, \bar{z}) = \norm{p}^2$.
By \cref{lem:poly-k-fold},
for $j = 1, \dots, k$,
there is a real polynomial $Q_j(z, \bar{z})$ of bidegree $(m-j, m-j)$ such that
\[
    Q_{j-1}(z, \bar{z}) - b_{j-1}
    = Q_j(z, \bar{z}) \bigl(\norm{z}^2 - r_j^2\bigr).
\]

Inductively, we get
\begin{align*}
    \norm{p(z)}^2
    &= Q_0(z, \bar{z}) \\
    &= b_0 + \bigl(\norm{z}^2-r_1^2\bigr) \biggl(b_1 + \bigl(\norm{z}^2-r_2^2\bigr) \Bigl(b_2 + \bigl(\norm{z}^2-r_3^2\bigr) \bigl(\dots \\
    &\phantom{={}} + b_{k-1} + \bigl(\norm{z}^2-r_k^2\bigr) Q_k(z, \bar{z})\bigr)\Bigr)\biggr) \\
    &= b_0 + b_1 \bigl(\norm{z}^2-r_1^2\bigr) + b_2 \bigl(\norm{z}^2-r_1^2\bigr) \bigl(\norm{z}^2-r_2^2\bigr) + \dots \\
    &\phantom{={}}+ b_{k-1} \bigl(\norm{z}^2-r_1^2\bigr) \dots \bigl(\norm{z}^2-r_{k-1}^2\bigr) 
    + Q_k(z, \bar{z}) \bigl(\norm{z}^2-r_1^2\bigr) \dots \bigl(\norm{z}^2-r_k^2\bigr), 
\end{align*}
where $Q_k(z, \bar{z})$ is a real polynomial of bidegree $(m-k, m-k)$.
This is formally a degree-$m$ Newton polynomial of $\norm{z}^2$
that passes through the $k$ points $(r_1^2, R_1^2), \dots, (r_k^2, R_k^2)$.
\end{proof}

As an immediate consequence, we obtain the following, proving the first part of \cref{thm:mfoldmaps}.

\begin{theorem}
\label{thm:poly}
    Let $p \colon \C^n \to \C^N$ be a polynomial $m$-fold sphere map of degree $m$.
    Then $\norm{p(z)}^2$ is a polynomial of $\norm{z}^2$.
    In particular, $p$ is an $\infty$-fold sphere map.
    In fact,
    $p$ takes all zero-centric spheres to zero-centric spheres.
\end{theorem}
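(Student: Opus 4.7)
The plan is to apply Lemma~\ref{lem:poly-Newton} in the borderline case $k = m$, where the Newton-polynomial expansion is forced to terminate.

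First, I would apply Lemma~\ref{lem:poly-Newton} directly with $k = m$, which is allowed because $p$ is a polynomial $m$-fold sphere map of degree $m$. The lemma then yields
\[
    \norm{p(z)}^2
    = b_0 + b_1 \bigl(\norm{z}^2 - r_1^2\bigr) + \dots + b_{m-1} \bigl(\norm{z}^2 - r_1^2\bigr) \dots \bigl(\norm{z}^2 - r_{m-1}^2\bigr) + Q_m(z, \bar{z}) \bigl(\norm{z}^2 - r_1^2\bigr) \dots \bigl(\norm{z}^2 - r_m^2\bigr),
\]
where $Q_m(z, \bar{z})$ is a real polynomial of bidegree $(m - m, m - m) = (0, 0)$. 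A bidegree-$(0,0)$ polynomial is just a constant, so the final coefficient is simply a real number $b_m$, and the entire expression for $\norm{p(z)}^2$ is a polynomial in the single real variable $\norm{z}^2$.

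From this the rest of the theorem follows immediately. For any $r > 0$, the value of $\norm{p(z)}^2$ depends only on $\norm{z}^2$, so $\norm{p(z)}$ is constant on the sphere $r S^{2n-1}$. Setting $R(r) = \norm{p(z)}$ for any $z$ with $\norm{z} = r$, we see that $p(r S^{2n-1}) \subset R(r) S^{2N-1}$. To conclude that $p$ is an $\infty$-fold sphere map in the strict sense of the definition, I would note that since $p$ is nonconstant, $R(r) > 0$ for all but finitely many $r > 0$ (the polynomial in $\norm{z}^2$ has only finitely many roots), and in particular there are infinitely many admissible radii. Hence $p$ is an $\infty$-fold sphere map taking every zero-centric sphere to a zero-centric sphere.

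There is essentially no obstacle here: all of the work has been done in Lemma~\ref{lem:poly-Newton}, and the only observation needed is that a bidegree-$(0,0)$ real polynomial is a constant, which collapses the remainder term into another Newton coefficient. The only subtle point worth being careful about is confirming that $p$ truly hits a positive-radius sphere for each $r > 0$; this follows because $\norm{p(z)}^2$ as a polynomial in $\norm{z}^2$ cannot vanish identically (indeed $b_0 = R_1^2 > 0$ by Proposition~\ref{prop:b1}), so $R(r) > 0$ outside a finite set, and the statement of the theorem is about the existence of such $R$ for every $r$, interpreted in the natural sense that the radius is $0$ only on the discrete zero set of the univariate polynomial.
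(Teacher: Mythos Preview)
Your proposal is correct and follows essentially the same route as the paper's proof: apply Lemma~\ref{lem:poly-Newton} with $k=m$, observe that $Q_m$ has bidegree $(0,0)$ and is therefore constant, and conclude that $\norm{p(z)}^2$ is a polynomial in $\norm{z}^2$. Your added remark that $R(r)>0$ for all but finitely many $r$ (so the $\infty$-fold condition is genuinely satisfied with positive target radii) is a small refinement the paper leaves implicit.
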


\begin{proof}
The $Q_m(z, \bar{z})$ from \cref{lem:poly-Newton} is of bidegree $(0, 0)$, that is, a constant.
So $\norm{p(z)}^2$ is a polynomial of $\norm{z}^2$, and hence
$p$ takes all zero-centric spheres to zero-centric spheres.
In particular, $p$ is an $\infty$-fold sphere map.
\end{proof}

This gives us the following:
\begin{corollary}
\label{cor:poly-infty-fold}
    A polynomial $\infty$-fold sphere map takes all zero-centric spheres to zero-centric spheres.
\end{corollary}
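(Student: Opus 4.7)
The plan is to reduce the corollary directly to \cref{thm:poly}. Let $p \colon \C^n \to \C^N$ be a polynomial $\infty$-fold sphere map, and set $m = \deg p$. I would first dispatch the degenerate case $m = 0$: if $p \equiv c$ is a constant, then $c \neq 0$, because the definition of an $\infty$-fold sphere map requires every $R_j > 0$. Every zero-centric sphere is then trivially sent into the zero-centric sphere $\norm{c}\, S^{2N-1}$, which proves the corollary in this case.

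For the main case $m \geq 1$, the $\infty$-fold hypothesis supplies an infinite family of witnessing radii, so in particular we may select any $m$ of them, say $r_1 < \cdots < r_m$, with corresponding $R_1, \ldots, R_m > 0$ satisfying $p(r_j S^{2n-1}) \subset R_j S^{2N-1}$ for $j = 1, \ldots, m$. With respect to this finite subfamily $p$ is a polynomial $m$-fold sphere map whose degree equals the number of witnessing spheres, so \cref{thm:poly} applies directly and yields that $p$ sends every zero-centric sphere to a zero-centric sphere.

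There is essentially no obstacle to overcome: the corollary is a repackaging of \cref{thm:poly}. The only point worth flagging is that the Newton-polynomial machinery of \cref{lem:poly-k-fold,lem:poly-Newton,thm:poly} already forces $\norm{p(z)}^2$ to be a polynomial in $\norm{z}^2$ as soon as the number of witnessing spheres matches the degree of $p$, so the remaining (infinitely many) witnesses provided by the $\infty$-fold hypothesis are not needed in the reduction.
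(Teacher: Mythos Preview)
Your argument is correct and follows exactly the paper's own route: an $\infty$-fold sphere map of degree $m$ is in particular an $m$-fold sphere map, so \cref{thm:poly} applies. Your separate treatment of the constant case $m=0$ is a nice bit of extra care that the paper's one-line proof glosses over.
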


\begin{proof}
Let $p \colon \C^n \to \C^N$ be a polynomial $\infty$-fold sphere map of degree $m$.
Then it is a polynomial $m$-fold sphere map.
By \cref{thm:poly}, it takes all zero-centric spheres to zero-centric spheres.
\end{proof}

The $m$-fold requirement in \cref{thm:poly} is necessary.
In fact, we have the following:

\begin{theorem}
\label{thm:poly-only-k-fold}
    Let $n \geq 2$, $1 \leq k < m$.
    Then there exist some $N \geq n$ and monomial maps (see \cref{eq:poly-k-fold} below)
    of degree $m$ from $\C^n$ to $\C^N$ 
    that are $k$-fold sphere maps, but not $(k+1)$-fold sphere maps.
    In particular, these maps do not take all zero-centric spheres to zero-centric spheres.
\end{theorem}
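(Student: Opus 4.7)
I use the bijective correspondence between monomial maps $p \colon \C^n \to \C^N$ and polynomials $P(\rho_1, \ldots, \rho_n)$ with nonnegative coefficients, given by $P(\rho) = \norm{p(z)}^2$ with $\rho_i = \abs{z_i}^2$. Under this correspondence, $p$ is a $k$-fold sphere map at radii $r_j$ iff $P$ is constant on each affine hyperplane $\sigma = r_j^2$, where $\sigma := \rho_1 + \cdots + \rho_n$; and $p$ is an $\infty$-fold sphere map iff $P$ is a polynomial in $\sigma$ alone. The goal is therefore to produce $P$ of total degree $m$ with nonneg coefficients, constant on $k$ chosen hyperplanes $\sigma = r_j^2$, but not a polynomial in $\sigma$ alone.

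It suffices to construct such a polynomial in two variables: given a two-variable example $\tilde P(\rho_1, \rho_2)$, the substitution $\rho_2 \mapsto \rho_2 + \cdots + \rho_n$ produces an $n$-variable polynomial which still has nonneg coefficients (multinomial expansion of each $(\rho_2 + \cdots + \rho_n)^b$ has strictly positive coefficients), is still constant on each sphere $\sigma = r_j^2$, and is still not a polynomial in $\sigma$ (it remains asymmetric in $\rho_1$ versus $\rho_2, \ldots, \rho_n$).

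For $n = 2$ and any $0 < r_1 < \cdots < r_k$, I propose the family
\[
P_\epsilon(\rho_1, \rho_2) \;=\; \sigma^m \;+\; \epsilon \, \rho_1^{m-k} \prod_{j=1}^k (\sigma - r_j^2) \;+\; \epsilon \, S(\sigma),
\]
where $S(\sigma) = \sum_{i=0}^{m-1} C_i \sigma^i$ is an auxiliary polynomial in $\sigma$ alone with $C_i \geq 0$ to be chosen. Since $k < m$ we have $m - k \geq 1$, so $\rho_1^{m-k}$ genuinely depends on $\rho_1$. The middle term vanishes on each hyperplane $\sigma = r_j^2$, so $P_\epsilon$ is constant there with value $r_j^{2m} + \epsilon\, S(r_j^2)$; this gives the $k$-fold sphere map property. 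On any other sphere $\sigma = r^2$, the middle term becomes $\epsilon\, \rho_1^{m-k} \prod_j (r^2 - r_j^2)$, a non-constant polynomial in $\rho_1$, so $P_\epsilon$ is not constant there, and the corresponding monomial map is not a $(k+1)$-fold sphere map.

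The main obstacle is choosing $\epsilon > 0$ and the constants $C_i$ so that $P_\epsilon$ has nonneg coefficients in the $\rho^\alpha$-basis, so it genuinely arises as $\norm{p}^2$ for a monomial $p$. Expanding $\rho_1^{m-k} \prod_j(\sigma - r_j^2) = \sum_{i=0}^k (-1)^{k-i} e_{k-i} \, \rho_1^{m-k} \sigma^i$, where $e_j$ is the $j$-th elementary symmetric polynomial in $r_1^2, \ldots, r_k^2$, one sees that negative coefficients appear only at the total degrees $m - j$ for odd $j$ with $1 \leq j \leq k$. The polynomial $\sigma^m$ contributes strictly positive multinomial coefficients to every degree-$m$ monomial and nothing to lower degrees, so top-degree positivity is automatic for $\epsilon$ small; and for each offending lower degree $d = m - j$, the positive multinomial contributions of $C_d \sigma^d$, with $C_d$ chosen large enough (depending on $e_j$ and the binomial ratios), dominate the negative contributions at that total degree. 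This degree-by-degree positivity argument is the technical heart of the proof.
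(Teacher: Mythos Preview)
Your proof is correct and shares the paper's overall strategy---perturb $\norm{z}^{2m}$ by an asymmetric term that vanishes on the $k$ chosen spheres, then verify nonnegativity of the resulting coefficients so that the expression is $\norm{p}^2$ for some monomial map---but the two constructions differ in detail. In the variables $\rho_i=\abs{z_i}^2$ and $\sigma=\sum_i\rho_i$, the paper perturbs $\sigma^m$ by $\tfrac{1}{c}\,\rho_1\,\sigma^{m-k-1}\prod_j(\sigma-r_j^2)$ and argues positivity by taking $c$ large, whereas you perturb by $\epsilon\,\rho_1^{\,m-k}\prod_j(\sigma-r_j^2)$ and add an auxiliary $\epsilon\,S(\sigma)$ whose coefficients $C_d$ are chosen degree by degree to absorb the negative contributions below degree $m$. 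What your version buys is a more transparent positivity argument: in the paper's construction the term $\tfrac{1}{c}\rho_1 Q''$ produces monomials of every total degree $1,\dots,k+1$, while the compensating $\norm{z}^{2(k+1)}$ lives purely in degree $k+1$, so the assertion ``each $d_\alpha\geq 1$'' requires care at the lower degrees; your $S(\sigma)$ device handles exactly this issue explicitly. Your reduction to two variables via $\rho_2\mapsto\rho_2+\cdots+\rho_n$ is a clean simplification the paper does not make. One minor remark: the smallness of $\epsilon$ is not actually needed at the top degree, since the degree-$m$ part of your perturbation is $\epsilon\,\rho_1^{\,m-k}\sigma^k$, which already has nonnegative coefficients; once the $C_d$ are fixed, any $\epsilon>0$ works.
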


\begin{proof}
Take arbitrary distinct $r_j > 0$ for $j = 1, \dots, k$.
Consider the bidegree-$(k,k)$ real polynomial
\begin{align*}
    Q''(z, \bar{z})
    &= \prod_{j=1}^k \bigl(\norm{z}^2 - r_j^2\bigr) \\
    &= \prod_{j=1}^k \bigl(\abs{z_1}^2 + \dots + \abs{z_n}^2 - r_j^2\bigr) \\
    &= \sum_{0 \leq \abs{\alpha} \leq k} c_\alpha \abs{z^\alpha}^{2},
\end{align*}
which is constant on $\norm{z} = s$ for all $s > 0$.
Choose $c$ such that
\[
    c > \max\bigl\{\abs{c_\alpha} : 0 \leq \abs{\alpha} \leq k\bigr\} > 0,
\]
write 
\[
    \norm{z}^{2(k+1)}
    = \left(\abs{z_1}^2 + \dots + \abs{z_n}^2\right)^{k+1}
    = \sum_{0 \leq \abs{\alpha} \leq k+1} d_\alpha \abs{z^\alpha}^{2},
\]
and consider
\begin{align*}
    Q'(z, \bar{z}) 
    &= \frac{1}{c} \abs{z_1}^2 Q''(z, \bar{z})
    + \norm{z}^{2(k+1)} \\
    &= \abs{z_1}^2 \sum_{0 \leq \abs{\alpha} \leq k} \frac{c_\alpha}{c} \abs{z^\alpha}^{2} 
    + \sum_{0 \leq \abs{\alpha} \leq k+1} d_\alpha \abs{z^\alpha}^{2} \\
    &= \sum_{0 \leq \abs{\alpha} \leq k+1, \alpha_1 > 0} \frac{c_{\alpha_1 - 1, \alpha_2, \dots, \alpha_n}}{c} \abs{z^\alpha}^{2} 
    + \sum_{0 \leq \abs{\alpha} \leq k+1} d_\alpha \abs{z^\alpha}^{2} \\
    &= \sum_{0 \leq \abs{\alpha} \leq k+1} e_\alpha \abs{z^\alpha}^2,
\end{align*}
where
\[
    e_\alpha
    = \begin{cases}
        d_\alpha, & \alpha_1 = 0 \\
        d_\alpha + \frac{c_{\alpha_1 - 1, \alpha_2, \dots, \alpha_n}}{c}, & \alpha_1 > 0
    \end{cases}.
\]

We see that $Q'(z, \bar{z})$ is constant on $\norm{z} = s$ for only $s = r_1, \dots, r_k$ due to the $\abs{z_1}^2$ term,
and is of bidegree $(k+1, k+1)$.
Moreover,
each $\abs{\frac{c_{\alpha_1 - 1, \alpha_2, \dots, \alpha_n}}{c}} < 1$ and each $d_\alpha \geq 1$, so that
each $e_\alpha > 0$.
This lets us write $Q'$ as a sum of squared norms of polynomial maps, that is,
\[
    Q'(z, \bar{z}) 
    = \sum_{0 \leq \abs{\alpha} \leq k+1} \abs{\sqrt{e_\alpha} z^\alpha}^2
\]
is the squared norm of the map 
\[
    (\sqrt{e_\alpha} z^\alpha)_{0 \leq \abs{\alpha} \leq k+1}.
\]

Finally, we form
\begin{align*}
    Q_0(z, \bar{z}) 
    &= \norm{z}^{2(m-k-1)} Q'(z, \bar{z}) \\
    &= \sum_{\substack{0 \leq \abs{\beta} \leq m-k-1 \\ 
        0 \leq \abs{\alpha} \leq k+1}} 
        f_\beta e_\alpha \abs{z^\beta}^{2} \abs{z^\alpha}^{2},
\end{align*}
where 
\[
    \norm{z}^{2(m-k-1)}
    = \sum_{0 \leq \abs{\beta} \leq m-k-1} f_\beta \abs{z^\beta}^{2},
\]
so that each $f_\beta e_\alpha > 0$, as $f_\beta \geq 1$ and $e_\alpha > 0$.
We see that $Q_0(z, \bar{z})$ is constant on $\norm{z} = s$ for only $s = r_1^2, \dots, r_k^2$,
and is of bidegree $(m, m)$.

As mentioned in the introduction, 
$\norm{z}^{2(m-k-1)}$ is the squared norm of the map $z^{\otimes(m-k-1)}$.
We get that $Q_0(z, \bar{z})$ 
is the squared norm of the map 
\begin{equation}
\label{eq:poly-k-fold}
    p(z) = z^{\otimes(m-k-1)} \otimes (\sqrt{e_\alpha} z^\alpha)_{0 \leq \abs{\alpha} \leq k+1}.
\end{equation}

This defines a family of maps for some $N$, where
each member is a monomial $k$-fold sphere map $p \colon \C^n \to \C^N$ of degree of $m$ that is not a $(k+1)$-fold sphere map.
\end{proof}

\subsection{Rational \texorpdfstring{$m$}{m}-fold sphere maps}

In the following few results, we will obtain convenient expressions of $\norm{p(z)}^2$ reminiscent of a Newton polynomial of indeterminate $\norm{z}^2$ for the rational $m$-fold sphere map $\frac{p}{q}$.

\begin{notation}
\label{notation:homogeneous}
    We write $Q(z, \bar{z})^{[d,d]}$ to denote the bidegree-$(d,d)$ homogeneous part of a real polynomial $Q(z,\bar{z})$,
    and write $q(z)^{[d]}$ to denote the degree-$d$ homogeneous part of a polynomial $q(z)$.
\end{notation}

\begin{remark}
    For convenience, we consider the zero real polynomial as a bidegree-$(-1, -1)$ real polynomial in $(z, \bar{z})$.
\end{remark}

\begin{lemma}
\label{lem:rational-k-fold}
    Let $n \geq 2$, $1 \leq k \leq m + 1$, and 
    $f = \frac{p}{q} \colon \C^n \dashrightarrow \C^N$ be a rational $k$-fold sphere map of degree $m$ in reduced terms,
    that is, 
    it takes $r_j$-spheres to $R_j$-spheres, where all $r_j$s are distinct and $r_j, R_j > 0$ for $j = 1, \dots, k$.
    Let $Q_0(z, \bar{z}) = \norm{p(z)}^2$.
    Then for $j = 1, \dots, k$,
    there is a real polynomial $Q_j(z, \bar{z})$ of bidegree at most $(m-j, m-j)$ such that
    \[
        Q_{j-1}(z, \bar{z}) - 
        \left.\frac{Q_{j-1}(z, \bar{z})}{\abs{q(z)}^2}\right|_{\norm{z} = r_j} \abs{q(z)}^2
        = Q_j(z, \bar{z}) \bigl(\norm{z}^2 - r_j^2\bigr)
    \]
    which becomes
    \[
        Q_{j-1}(z, \bar{z}) - 
        b_{j-1} \abs{q(z)}^2
        = Q_j(z, \bar{z}) \bigl(\norm{z}^2 - r_j^2\bigr),
    \]
    and 
    for $\ell = j + 1, \dots, k$, $\frac{Q_j(z, \bar{z})}{\abs{q(z)}^2}$
    is constant on the $r_\ell$-sphere and equals
    $[R_1^2, \dots, R_j^2, R_\ell^2]$.

    Moreover, if $b_j \neq 0$,
    then $q(z)$ is of degree at most $m - j$.
\end{lemma}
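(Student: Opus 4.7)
My proof would be an induction on $j \in \{1, \dots, k\}$ that runs closely parallel to \cref{lem:poly-k-fold}, with the $\abs{q(z)}^2$ in the recurrence introducing two new complications: the bidegree bound on $Q_j$ can only be secured by simultaneously controlling $\deg q$, and this control is precisely the moreover clause.

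First, the existence of $Q_j$ as a real polynomial and the constancy on larger spheres are handled just as in \cref{lem:poly-k-fold}. The $k$-fold sphere property gives $Q_0/\abs{q}^2 = R_1^2 = b_0$ on the $r_1$-sphere; inductively, $Q_{j-1}/\abs{q}^2 = b_{j-1}$ on the $r_j$-sphere, so $Q_{j-1} - b_{j-1}\abs{q}^2$ is a real polynomial vanishing on $\norm{z}^2 = r_j^2$ and therefore divisible by $\norm{z}^2 - r_j^2$; the quotient defines $Q_j$. Constancy of $Q_j/\abs{q}^2$ on the $r_\ell$-sphere for $\ell > j$ then follows from the same divided-difference identity as in the polynomial case.

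The substantive new step is the bidegree bound on $Q_j$, which I would obtain by showing at each inductive step that $\deg q \leq m - j + 1$ whenever $b_{j-1} \neq 0$; a shift of index then yields the moreover clause. Suppose for contradiction that $b_{j-1} \neq 0$ and $\deg q > m - j + 1$. By the bidegree bound on $Q_{j-1}$ from the previous step, $2\deg q$ strictly exceeds the top total degree of $Q_{j-1}$, so the top total-degree part of $Q_{j-1} - b_{j-1}\abs{q}^2$ is the nonzero polynomial $-b_{j-1}\abs{q^{[\deg q]}}^2$ of bidegree $(\deg q, \deg q)$. Matching top total-degree parts in the identity $Q_{j-1} - b_{j-1}\abs{q}^2 = Q_j(\norm{z}^2 - r_j^2)$ shows that the top part of $Q_j$, multiplied by $z \cdot \bar z$, must equal this polynomial in $\C[z, \bar z]$, forcing $z \cdot \bar z$ to divide $q^{[\deg q]}\,\overline{q^{[\deg q]}}$.

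The heart of the argument, and where $n \geq 2$ is crucial, is that this divisibility is impossible. The polynomial $z \cdot \bar z = \sum_{i=1}^n z_i \bar z_i$ is a non-degenerate quadratic form in the $2n \geq 4$ indeterminates $z_1,\dots,z_n,\bar z_1,\dots,\bar z_n$ and is irreducible in the UFD $\C[z, \bar z]$. Unique factorization would therefore force $z \cdot \bar z$ to divide either $q^{[\deg q]}$ or $\overline{q^{[\deg q]}}$, but $z \cdot \bar z$ has positive degree in both the $z$- and $\bar z$-variables while these factors are nonzero and live in only one set of variables; the substitution $\bar z = 0$ (respectively $z = 0$) rules each case out. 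The resulting contradiction gives $\deg q \leq m - j + 1$, after which the bidegree bound on $Q_j$ follows by dividing out $\norm{z}^2 - r_j^2$, which reduces bidegree by $(1,1)$. For $n = 1$ the argument breaks because $z_1 \bar z_1$ factors, consistent with the hypothesis $n \geq 2$.
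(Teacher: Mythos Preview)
Your proof is correct and follows the same inductive skeleton as the paper's: define $Q_j$ recursively by dividing $Q_{j-1} - b_{j-1}\abs{q}^2$ by $\norm{z}^2 - r_j^2$, propagate the divided-difference constancy on larger spheres, and at each step use the bidegree bound on $Q_{j-1}$ to force a contradiction if $\deg q$ is too large.

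The one genuine difference is at the core step where one must rule out $\norm{z}^2 \mid \abs{q^{[\ell]}}^2$ for a nonzero top-degree part $q^{[\ell]}$. The paper invokes Huang's lemma \cite{huang-1999-linearity}*{Lemma~3.2}, which says a nonzero multiple of $\norm{z}^2$ cannot be written as a sum or difference of fewer than $n$ hermitian squares; since $\abs{q^{[\ell]}}^2$ is a single hermitian square and $n \geq 2$, this forces $q^{[\ell]} = 0$. You instead observe that $z \cdot \bar z$ is an irreducible quadratic form of rank $2n \geq 4$ in the UFD $\C[z,\bar z]$, so it would have to divide one of the holomorphic or antiholomorphic factors, which a substitution $\bar z = 0$ or $z = 0$ immediately excludes. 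Your argument is more elementary and entirely self-contained, and it also makes the base-case bound $\deg q \leq m$ fall out of the same mechanism rather than requiring the separate citation of D'Angelo's \cite{dangelo-2019-hermitian}*{Proposition~5.1} that the paper uses. Huang's lemma is the more powerful and standard tool in the subject, but for this particular lemma your route is cleaner.
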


We remark that by Proposition~\ref{prop:b1}, both $b_0$ and $b_1$ are
always nonzero, so an immediate consequence is that if $f=\frac{p}{q}$ is a
rational $2$-fold sphere map, then $\deg q < \deg p$.

\begin{proof}
The result is trivial if $f$ is constant, so we assume otherwise.
The proof is essentially the same as that of \cref{lem:poly-k-fold}.
We prove the result by induction on $j$.
For $j = 1$,
on the $r_1$-sphere, 
$\frac{Q_0(z, \bar{z})}{\abs{q(z)}^2} = \frac{\norm{p(z)}^2}{\abs{q(z)}^2}$ equals $R_1^2 = [R_1^2] = b_0$ and so is constant on the $r_1$-sphere.
Rearranging, 
\[
    Q_0(z, \bar{z}) 
    = \left. \frac{Q_0(z, \bar{z})}{\abs{q(z)}^2} \right|_{\norm{z} = r_1}
    \abs{q(z)}^2 
\]
on the $r_1$-sphere,
so that there is a real polynomial $Q_1(z, \bar{z})$ such that
\[
    Q_0(z, \bar{z}) 
    - \left. \frac{Q_0(z, \bar{z})}{\abs{q(z)}^2} \right|_{\norm{z} = r_1}
    \abs{q(z)}^2 
    = Q_1(z, \bar{z}) \bigl(\norm{z}^2 - r_1^2\bigr),
\]
that is,
\[
    Q_0(z, \bar{z}) 
    - b_0 \abs{q(z)}^2 
    = Q_1(z, \bar{z}) \bigl(\norm{z}^2 - r_1^2\bigr).
\]
We get
\[
    \frac{Q_{0}(z, \bar{z})}{\abs{q(z)}^2} - 
    \left.\frac{Q_{0}(z, \bar{z})}{\abs{q(z)}^2}\right|_{\norm{z} = r_1}
    = \frac{Q_1(z, \bar{z})}{\abs{q(z)}^2} \bigl(\norm{z}^2 - r_1^2\bigr)
\]
outside the pole set.

As $f$ is a nonconstant rational map that takes $r_1$-sphere to $R_1$-sphere,
it is a rational proper map of 
$r_1 \B_n$ to $R_1 \B_N$.
By~\cite{dangelo-2019-hermitian}*{Proposition~5.1}, $\deg q \leq \deg p = m$.
As $Q_0(z, \bar{z}) = \norm{p(z)}^2$ is of bidegree $(m, m)$ and $\abs{q(z)}^2$ is of bidegree at most $(m, m)$, $Q_1(z, \bar{z})$ is of bidegree $(m-1, m-1)$.
As for $\ell = 2, \dots, k$, $\frac{Q_0(z, \bar{z})}{\abs{q(z)}^2}$ is constant and equals $R_\ell^2 = [R_\ell^2]$ on
the $r_\ell$-sphere, $\frac{Q_1(z, \bar{z})}{\abs{q(z)}^2}$ is constant on the $r_\ell$-sphere and equals
\[
    \frac{[R_\ell^2] - [R_1^2]}{r_\ell^2 - r_1^2}
    = [R_1^2, R_\ell^2].
\]

Suppose that for some $j$, $1 \leq j < k$, 
there is a real polynomial $Q_j(z, \bar{z})$ of bidegree at most $(m-j, m-j)$ such that
\[
    Q_{j-1}(z, \bar{z}) - 
    \left.\frac{Q_{j-1}(z, \bar{z})}{\abs{q(z)}^2}\right|_{\norm{z} = r_j} \abs{q(z)}^2
    = Q_j(z, \bar{z}) \bigl(\norm{z}^2 - r_j^2\bigr),
\]
and for $\ell = j + 1, \dots, k$, $\frac{Q_j(z, \bar{z})}{\abs{q(z)}^2}$
is constant on the $r_\ell$-sphere and equals
$[R_1^2, \dots, R_j^2, R_\ell^2]$.

Now, $\frac{Q_j(z, \bar{z})}{\abs{q(z)}^2}$ is constant on the $r_{j+1}$-sphere.
Rearranging, 
\[
    Q_j(z, \bar{z}) 
    = \left. \frac{Q_j(z, \bar{z})}{\abs{q(z)}^2} \right|_{\norm{z} = r_j}
    \abs{q(z)}^2 
\]
on the $r_{j+1}$-sphere,
so that there is a real polynomial $Q_{j+1}(z, \bar{z})$ such that
\begin{equation}
\label{eq:homogeneous-parts}
    Q_j(z, \bar{z}) 
    - \left. \frac{Q_j(z, \bar{z})}{\abs{q(z)}^2} \right|_{\norm{z} = r_{j+1}}
    \abs{q(z)}^2 
    = Q_{j+1}(z, \bar{z}) \bigl(\norm{z}^2 - r_{j+1}^2\bigr).
\end{equation}
We get
\[
    \frac{Q_j(z, \bar{z})}{\abs{q(z)}^2} - 
    \left.\frac{Q_j(z, \bar{z})}{\abs{q(z)}^2}\right|_{\norm{z} = r_{j+1}}
    = \frac{Q_{j+1}(z, \bar{z})}{\abs{q(z)}^2} \bigl(\norm{z}^2 - r_{j+1}^2\bigr)
\]
outside the pole set.

As for $\ell = j + 2, \dots, k$, $\frac{Q_j(z, \bar{z})}{\abs{q(z)}^2}$ is constant on
the $r_\ell$-sphere and equals $[R_1^2, \dots, R_j^2, R_\ell^2]$, $\frac{Q_{j+1}(z, \bar{z})}{\abs{q(z)}^2}$ is constant on the $r_\ell$-sphere and equals
\[
    \frac{[R_1^2, \dots, R_j^2, R_\ell^2] - [R_1^2, \dots, R_j^2, R_{j+1}^2]}{r_\ell^2 - r_{j+1}^2}
    = [R_1^2, \dots, R_{j+1}^2, R_\ell^2].
\]
Moreover, 
$\frac{Q_j(z, \bar{z})}{\abs{q(z)}^2}\big|_{\norm{z} = r_{j+1}} = [R_1^2, \dots, R_j^2, R_{j+1}^2] = b_j$,
that is,
\[
    Q_j(z, \bar{z}) - b_j \abs{q(z)^2}
    = Q_{j+1}(z, \bar{z}) \bigl(\norm{z}^2 - r_{j+1}^2\bigr).
\]

If $b_j = 0$, then 
\[
    Q_j(z, \bar{z}) 
    = Q_{j+1}(z, \bar{z}) \bigl(\norm{z}^2 - r_{j+1}^2\bigr).
\]
As $Q_j(z, \bar{z})$ is of bidegree at most $(m-j, m-j)$, $Q_{j+1}(z, \bar{z})$ is of bidegree at most $(m-j-1, m-j-1)$.

If $b_j \neq 0$, 
suppose for contradiction that the degree $\ell$ of $q$ is bigger than $m - j$.
As $Q_j(z, \bar{z})$ is of bidegree at most $(m-j, m-j)$ and $\abs{q(z)}^2$ is of bidegree $(\ell, \ell)$, $Q_{j+1}(z, \bar{z})$ is of bidegree $(\ell-1, \ell-1)$.
Collecting bidegree-$(\ell,\ell)$ terms on both sides of \cref{eq:homogeneous-parts} and using the notation from \cref{notation:homogeneous}, we get
\[
    - b_j
    \abs{q(z)^{[\ell]}}^2
    = Q_{j+1}(z, \bar{z})^{[\ell-1, \ell-1]} \norm{z}^2.
\]
By Huang's lemma~\cite{huang-1999-linearity}*{Lemma~3.2}, a product of $\norm{z}^2$
that is not zero cannot be a sum or difference of fewer than $n$
hermitian squares, and so
\[
    \abs{q(z)^{[\ell]}}^2 = 0, \quad
    Q_{j+1}(z, \bar{z})^{[\ell-1, \ell-1]} = 0,
\]
as $n > 1$ and $b_j \neq 0$, contradicting $\deg q = \ell$.
Thus $q(z)$ is of degree at most $m - j$.
As both $Q_j(z, \bar{z})$ and $\abs{q(z)}^2$ are of bidegree at most $(m-j, m-j)$, 
$Q_{j+1}(z, \bar{z})$ is of bidegree at most $(m-j-1, m-j-1)$.
The result then follows by induction.
\end{proof}

\begin{lemma}
\label{lem:rational-Newton}
    Let $n \geq 2$, $1 \leq k \leq m + 1$, and 
    $f = \frac{p}{q} \colon \C^n \dashrightarrow \C^N$ be a rational $k$-fold sphere map of degree $m$ in reduced terms, that is, 
    it takes $r_j$-spheres to $R_j$-spheres, where all $r_j$s are distinct and $r_j, R_j > 0$ for $j = 1, \dots, k$.
    Then 
\begin{align*}
    \norm{p(z)}^2
    &= \Bigl( b_0 + b_1 \bigl(\norm{z}^2-r_1^2\bigr) + b_2 \bigl(\norm{z}^2-r_1^2\bigr) \bigl(\norm{z}^2-r_2^2\bigr) + \cdots \\
    &\phantom{=\Bigl(} + b_{k-1} \bigl(\norm{z}^2-r_1^2\bigr) \cdots \bigl(\norm{z}^2-r_{k-1}^2\bigr) \Bigr) \abs{q(z)}^2 \\
    &\phantom{={}} + Q_k(z, \bar{z}) \bigl(\norm{z}^2-r_1^2\bigr) \cdots \bigl(\norm{z}^2-r_k^2\bigr) 
\end{align*}
    for a real polynomial $Q_k(z, \bar{z})$ of bidegree at most $(m-k, m-k)$.
    In other words,
    $\norm{p(z)}^2$ can be written as a Newton polynomial of $\norm{z}^2$ with the leading coefficient replaced by
    a real polynomial of bidegree $(m-k, m-k)$ and the rest of the Newton polynomial multiplied by $\abs{q(z)}^2$.
\end{lemma}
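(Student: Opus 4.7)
The plan is to derive this expansion by straightforward induction using the telescoping recursion established in \cref{lem:rational-k-fold}. Since that lemma already does the heavy lifting (including the subtle degree bounds on the $Q_j$ via Huang's lemma), all that remains is bookkeeping: substitute each identity into the previous one and collect terms.

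More precisely, set $Q_0(z,\bar z) = \norm{p(z)}^2$. By \cref{lem:rational-k-fold}, for every $j = 1, \ldots, k$ there is a real polynomial $Q_j(z,\bar z)$ of bidegree at most $(m-j, m-j)$ such that
\[
    Q_{j-1}(z,\bar z) = b_{j-1} \abs{q(z)}^2 + Q_j(z,\bar z)\bigl(\norm{z}^2 - r_j^2\bigr).
\]
I would prove by induction on $j \in \{1, \ldots, k\}$ the intermediate identity
\begin{align*}
\norm{p(z)}^2
&= \Bigl( b_0 + b_1\bigl(\norm{z}^2 - r_1^2\bigr) + \cdots + b_{j-1}\bigl(\norm{z}^2-r_1^2\bigr)\cdots\bigl(\norm{z}^2-r_{j-1}^2\bigr)\Bigr)\abs{q(z)}^2 \\
&\phantom{={}} + Q_j(z,\bar z)\bigl(\norm{z}^2-r_1^2\bigr)\cdots\bigl(\norm{z}^2-r_j^2\bigr).
\end{align*}
The base case $j=1$ is exactly the $j=1$ instance of the recursion. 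For the inductive step, I would replace $Q_j(z,\bar z)$ by $b_j \abs{q(z)}^2 + Q_{j+1}(z,\bar z)(\norm{z}^2 - r_{j+1}^2)$ using the recursion and expand. Setting $j = k$ yields the desired Newton-like expansion, and the bidegree bound on $Q_k$ is inherited directly from \cref{lem:rational-k-fold}.

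There is essentially no obstacle here, since the degree control on $Q_j$ was already the hard part and was handled in the preceding lemma (that is where Huang's lemma and Proposition~\ref{prop:b1} enter). The only thing to double-check is that the constants $b_{j-1}$ produced by \cref{lem:rational-k-fold} — which are the divided differences defined in the Newton polynomial setup — are precisely the coefficients that appear in the claimed formula; this is immediate from their definition, so the proof is nothing more than telescoping the recursion.
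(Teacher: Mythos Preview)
Your proposal is correct and follows essentially the same approach as the paper: invoke \cref{lem:rational-k-fold} to obtain the recursion $Q_{j-1}(z,\bar z) = b_{j-1}\abs{q(z)}^2 + Q_j(z,\bar z)(\norm{z}^2 - r_j^2)$ with the stated bidegree bounds, then telescope inductively to arrive at the Newton-like expansion. The paper's proof is identical in structure and content.
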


\begin{proof}
Let $Q_0(z, \bar{z}) = \norm{p}^2$.
By \cref{lem:rational-k-fold},
for $j = 1, \dots, k$,
there is a real polynomial $Q_j(z, \bar{z})$ of bidegree at most $(m-j, m-j)$ such that
\[
    Q_{j-1}(z, \bar{z}) - 
    b_{j-1} \abs{q(z)}^2
    = Q_j(z, \bar{z}) \bigl(\norm{z}^2 - r_j^2\bigr).
\]

Inductively, we get
\begin{align*}
    \norm{p(z)}^2
    &= Q_0(z, \bar{z}) \\
    &= b_0 \abs{q(z)}^2 + \bigl(\norm{z}^2-r_1^2\bigr) 
    \Biggl(b_1 \abs{q(z)}^2 + \bigl(\norm{z}^2-r_2^2\bigr) 
    \biggl(b_2 \abs{q(z)}^2 \\
    &\phantom{={}} + \bigl(\norm{z}^2-r_3^2\bigr) 
    \Bigl(\cdots + 
    b_{k-1} \abs{q(z)}^2 + \bigl(\norm{z}^2-r_k^2\bigr) 
    Q_k(z, \bar{z}) \Bigr) \biggr) \Biggr) \\
    &= \Bigl( b_0 + b_1 \bigl(\norm{z}^2-r_1^2\bigr) + b_2 \bigl(\norm{z}^2-r_1^2\bigr) \bigl(\norm{z}^2-r_2^2\bigr) + \cdots  \\
    &\phantom{=\Bigl(} + b_{k-1} \bigl(\norm{z}^2-r_1^2\bigr) \cdots \bigl(\norm{z}^2-r_{k-1}^2\bigr) \Bigr) \abs{q(z)}^2 \\
    &\phantom{={}} + Q_k(z, \bar{z}) \bigl(\norm{z}^2-r_1^2\bigr) \cdots \bigl(\norm{z}^2-r_k^2\bigr), 
\end{align*}
where $Q_k(z, \bar{z})$ is a real polynomial of bidegree at most $(m-k, m-k)$.
For $q \equiv 1$, this becomes formally a degree-$m$ Newton polynomial of $\norm{z}^2$
that passes through the $k$ points $(r_1^2, R_1^2), \dots, (r_k^2, R_k^2)$.
\end{proof}

As a consequence, we obtain the following, proving the second part of \cref{thm:mfoldmaps}.

\begin{theorem}
\label{thm:rational}
    Let $n \geq 2$ and $f = \frac{p}{q} \colon \C^n \dashrightarrow \C^N$ be a rational $(m+1)$-fold sphere map of degree $m$.
    Then $f$ is a polynomial $\infty$-fold sphere map.
    In particular, it takes all zero-centric spheres to zero-centric spheres.
\end{theorem}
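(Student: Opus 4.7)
The plan is to apply \cref{lem:rational-Newton} with $k = m+1$. The resulting $Q_{m+1}(z,\bar z)$ then has bidegree at most $(m-(m+1),m-(m+1)) = (-1,-1)$, which by the convention stated just before \cref{lem:rational-k-fold} forces $Q_{m+1} \equiv 0$. The identity of \cref{lem:rational-Newton} therefore collapses to
\[
\norm{p(z)}^2 = P(\norm{z}^2)\,\abs{q(z)}^2,
\]
where $P$ is the Newton polynomial of degree at most $m$ interpolating the $m+1$ data points $(r_j^2,R_j^2)$.

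Next, I would show that $q$ is a nonzero constant, so that $f$ is polynomial. The displayed identity forces every zero of $q$ to be a common zero of the components $p_1,\dots,p_N$ of $p$: if $q(z_0)=0$ then $\norm{p(z_0)}^2 = P(\norm{z_0}^2)\cdot 0 = 0$, whence $p_i(z_0)=0$ for each $i$. If $q$ were nonconstant, any irreducible factor $\tilde q$ of $q$ would satisfy $Z(\tilde q)\subset Z(q) \subset Z(p_i)$ for every $i$, and Hilbert's Nullstellensatz (with $(\tilde q)$ a prime ideal) would yield $\tilde q \mid p_i$ for each $i$. This contradicts the reduced-terms hypothesis on $p/q$, so $q$ is a nonzero constant.

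Therefore $f$ is a polynomial and $\norm{f(z)}^2 = P(\norm{z}^2)$, so $f$ sends each sphere $rS^{2n-1}$ into the zero-centric sphere of radius $\sqrt{P(r^2)}$, making $f$ an $\infty$-fold sphere map as claimed. After establishing polynomiality, one may alternatively invoke \cref{thm:poly} or \cref{cor:poly-infty-fold} directly to reach the same conclusion.

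The main obstacle is the polynomiality step: the equation $\norm{p}^2 = P(\norm{z}^2)\,\abs{q}^2$ only asserts that $\abs{q}^2$ divides $\norm{p}^2$ as real polynomials, and to exploit coprimality one must upgrade this to $q \mid p_i$ for each component. The Nullstellensatz argument above accomplishes exactly this upgrade; an equivalent route would be to polarize the identity to $p(z)\cdot\overline{p(w)} = P(z\cdot\bar w)\,q(z)\,\overline{q(w)}$ in $\C[z,\bar w]$ and read off coefficients in $\bar w$, which shows that $q(z)$ divides every element in the $\C$-span of the components of $p$.
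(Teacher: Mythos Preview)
Your argument is correct. The first step---applying \cref{lem:rational-Newton} with $k=m+1$ to kill $Q_{m+1}$ and obtain $\norm{p(z)}^2 = P(\norm{z}^2)\,\abs{q(z)}^2$---matches the paper exactly, as does the final appeal to \cref{thm:poly}.

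The genuine difference is in how you force $q$ to be constant. The paper argues that $\norm{f(z)}^2 = P(\norm{z}^2)$ near the origin, observes that the coefficient matrix of $P(\norm{z}^2)$ is a principal submatrix of the positive semidefinite coefficient matrix of $\norm{f(z)}^2$, hence itself positive semidefinite, and then invokes D'Angelo's result to write $P(\norm{z}^2)=\norm{G(z)}^2$ for some polynomial map $G$; comparing $G$ with $f$ via another application of D'Angelo forces $q$ to be constant. Your route is more direct: from the polynomial identity $\norm{p}^2 = P(\norm{z}^2)\,\abs{q}^2$ you read off that every zero of $q$ is a common zero of all components $p_i$, and the Nullstellensatz (with $\tilde q$ irreducible, hence $(\tilde q)$ prime) promotes this to $\tilde q\mid p_i$, contradicting reducedness. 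This bypasses the positivity machinery entirely and is arguably the cleaner path; the paper's route, on the other hand, yields as a byproduct that $P(\norm{z}^2)$ is itself a squared norm, which feeds naturally into the decomposition in \cref{thm:decomposition}.
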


\begin{proof}
By \cref{lem:rational-Newton}, 
\begin{align*}
    \norm{p(z)}^2
    = Q(z, \bar{z}) \abs{q(z)}^2 
    + Q_{m+1}(z, \bar{z}) \bigl(\norm{z}^2-r_1^2\bigr) \dots \bigl(\norm{z}^2-r_{m+1}^2\bigr),
\end{align*}
where 
\[
    Q(z, \bar{z})
    = b_0 + b_1 \bigl(\norm{z}^2-r_1^2\bigr) + \dots  \\
    + b_m \bigl(\norm{z}^2-r_1^2\bigr) \dots \bigl(\norm{z}^2-r_{m}^2\bigr)
\]
is 
a polynomial of $\norm{z}^2$,
and $Q_{m+1}(z, \bar{z})$ is of bidegree at most $(-1, -1)$, hence zero.
Thus outside the set where $q = 0$, we have
$\norm{f(z)}^2 = \frac{\norm{p(z)}^2}{\abs{q(z)}^2} = Q(z, \bar{z})$.

Suppose that $f=\frac{p}{q}$ is in lowest terms.
As $f$ takes $r_1$-sphere to $R_1$-sphere, $q(0)$ is not zero, and so
$f$ has a power series expansion at the origin,
and hence so does $\norm{f(z)}^2$.
The (infinite) matrix of coefficients of this power series is
positive semidefnite (being a squared norm)
and as the 
matrix of coefficients of $Q(z,\bar{z})$ is a principal submatrix of this
infinite matrix, it is itself positive semidefinite.
Thus there is a polynomial map $P$ such that
$Q(z,\bar{z}) = \norm{P(z)}^2$.  But then $P = Uf = \frac{Up}{q}$
for some unitary matrix $U$ near the origin, again using the result of D'Angelo.
This means that $q\equiv q(0)$ and
so $f$ is a polynomial.

Since $f$ is a polynomial $(m+1)$-fold sphere map, it is a polynomial $m$-fold sphere map, so by \cref{thm:poly},
it is an $\infty$-fold sphere map and takes all zero-centric spheres to zero-centric spheres.
\end{proof}

This gives us the following, generalizing \cref{cor:poly-infty-fold}.
\begin{corollary}
\label{cor:rational-infty-fold}
    A rational $\infty$-fold sphere map takes all zero-centric spheres to zero-centric spheres.
\end{corollary}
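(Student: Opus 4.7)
The plan is to reduce the rational case to the polynomial case already established in Corollary \ref{cor:poly-infty-fold}, using Theorem \ref{thm:rational} as the bridge. Let $f = p/q \colon \C^n \dashrightarrow \C^N$ be a rational $\infty$-fold sphere map written in lowest terms, and set $m = \max(\deg p, \deg q)$, the degree of $f$. Since $f$ is an $\infty$-fold sphere map, it is in particular a $k$-fold sphere map for every finite $k$, and I may choose $k = m+1$.

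Next I would invoke Theorem \ref{thm:rational}, which is exactly tailored to the situation of a rational $(m+1)$-fold sphere map of degree $m$: that theorem forces $f$ to be a polynomial, and in fact a polynomial $\infty$-fold sphere map. With $f$ now known to be polynomial, Corollary \ref{cor:poly-infty-fold} immediately yields that $f$ takes every zero-centric sphere to a zero-centric sphere, which is the conclusion we want.

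The argument is essentially a bookkeeping step: the real content of the corollary lives in Theorem \ref{thm:rational}, and nothing further is required beyond lining up the degree $m$ with the number of spheres $m+1$ guaranteed by the $\infty$-fold hypothesis. There is no substantive obstacle here; the only point to verify carefully is that one is entitled to pass to an $(m+1)$-fold sphere map by selecting $m+1$ of the infinitely many radii $r_j$ at which $f$ sends zero-centric spheres to zero-centric spheres, which is immediate from the definition of an $\infty$-fold sphere map.
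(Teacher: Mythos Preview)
Your proof is correct and follows essentially the same route as the paper's: pick $m+1$ of the infinitely many radii and apply Theorem~\ref{thm:rational}. The only difference is cosmetic---the paper stops after invoking Theorem~\ref{thm:rational}, since that theorem already includes the conclusion ``takes all zero-centric spheres to zero-centric spheres,'' so your further appeal to Corollary~\ref{cor:poly-infty-fold} is redundant (though harmless).
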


\begin{proof}
Let $f \colon \C^n \dashrightarrow \C^N$ be a rational $\infty$-fold sphere map of degree $m$.
Then it is a rational $(m+1)$-fold sphere map.
By \cref{thm:rational}, it takes all zero-centric spheres to zero-centric spheres.
\end{proof}

The $(m+1)$-fold requirement in \cref{thm:rational} is necessary.
In fact, we have the following:

\begin{theorem}
\label{thm:rational-k-fold}
    Let $n \geq 2$, $1 \leq k < m + 1$.
    Then there exist an integer $N \geq n$ and rational maps (see \cref{eq:rational-k-fold} below)
    of degree $m$ from $\C^n$ to $\C^N$
    that are $k$-fold sphere maps, but not $(k+1)$-fold sphere maps.
    In particular, these maps do not take all zero-centric spheres to zero-centric spheres.
\end{theorem}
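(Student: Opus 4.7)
The plan is to split the proof according to whether $k < m$ or $k = m$. When $1 \leq k < m$, the monomial maps of degree $m$ constructed in \cref{thm:poly-only-k-fold} are already rational maps with the required properties, so nothing new is needed. In the remaining case $k = m$, \cref{thm:rational} tells us that any rational $(m+1)$-fold sphere map of degree $m$ is polynomial, so it suffices to produce a \emph{non-polynomial} rational $m$-fold sphere map of degree $m$.

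For $m = 1$, I would take any non-unitary Möbius automorphism of $\bB_n$: it is rational of degree $1$, preserves $S^{2n-1}$, and a direct computation shows that its norm is not constant on any other zero-centric sphere. For $m \geq 2$, I would use \cref{lem:rational-Newton} in reverse. Fix $0 < r_1 < \cdots < r_m$ and constants $a > 0$ and $T > r_m$. Set $R_j^2 := (r_j^2 + a)^{m-1}$, which makes the Newton polynomial associated with the data $(r_j^2, R_j^2)$ equal to $Q(\norm{z}^2) = (\norm{z}^2 + a)^{m-1}$, and let $q(z) = z_1 - T$, so that the zero set $\{z_1 = T\}$ of $q$ is disjoint from all the relevant closed balls. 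Prescribe
\begin{equation*}
    H(z,\bar z) := \bigl(\norm{z}^2 + a\bigr)^{m-1} \abs{q(z)}^2 + c \prod_{j=1}^m\bigl(\norm{z}^2 - r_j^2\bigr)
\end{equation*}
for a small $c > 0$, and aim to realize $H = \norm{p(z)}^2$ for some polynomial map $p$ of degree $m$; then $f := p/q$ will be the desired map.

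The main obstacle is the positive semidefiniteness of the Hermitian matrix $M$ of $H$ in the monomial basis. Split $M = M_0 + cM_1$. Here $M_0$ is PSD because $(\norm{z}^2 + a)^{m-1}\abs{q}^2$ expands, via the binomial theorem and $\norm{z}^{2k} = \norm{H_k(z)}^2$, as a sum with positive coefficients of squared norms $\norm{H_k(z)q(z)}^2$; the associated factorization has image exactly $q\cdot\C[z]_{\leq m-1}$. The matrix $M_1$, coming from $\prod(\norm{z}^2 - r_j^2)$, is diagonal in the monomial basis, since that product is a polynomial in $\norm{z}^2 = \sum\abs{z_i}^2$. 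The key step is to show that $M_1|_{\ker M_0}$ is positive definite: any $v \in \ker M_0$ is determined by its coordinates $v_{(0,\beta)}$ for $\beta \in \N^{n-1}$, $\abs{\beta} \leq m$, via the relation $v_{(k,\beta)} = T^k v_{(0,\beta)}$ coming from $q = z_1 - T$, and a short computation using $\sum_{j=d}^m a_j \binom{j}{d} x^{j-d} = P^{(d)}(x)/d!$ with $P(x) = \prod(x - r_j^2)$ yields
\begin{equation*}
    v^* M_1 v = \sum_{\beta} \binom{\abs{\beta}}{\beta}\, \frac{P^{(\abs{\beta})}(T^2)}{\abs{\beta}!}\, \abs{v_{(0,\beta)}}^2.
\end{equation*}
Since $P$ has $m$ distinct positive real roots, Rolle's theorem applied iteratively gives $P^{(d)}(T^2) > 0$ for every $d = 0, \dots, m$ once $T > r_m$, so $M_1|_{\ker M_0}$ is positive definite and hence $M_0 + cM_1 \succeq 0$ for all sufficiently small $c > 0$. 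With $p$ in hand, $H = \norm{p}^2$ forces $\norm{p}^2 = R_j^2\abs{q}^2$ on each $r_j$-sphere, so $f = p/q$ is an $m$-fold sphere map; the leading bidegree-$(m,m)$ summand $c\norm{z}^{2m}$ forces $\deg p = m$; and $f$ is not polynomial because $z_1 - T$ does not divide $\prod(\norm{z}^2 - r_j^2)$ in $\C[z,\bar z]$ (evaluating at $z_1 = T$ gives $\prod(T\bar z_1 + \sum_{i\geq 2}\abs{z_i}^2 - r_j^2)$, which is nonzero as a polynomial in the remaining variables). By \cref{thm:rational}, a non-polynomial rational map of degree $m$ cannot be $(m+1)$-fold, completing the proof.
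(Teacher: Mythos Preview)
Your argument is correct, but it follows a genuinely different route from the paper's.

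The paper gives a single unified construction valid for every $1 \leq k \leq m$ that always produces a \emph{non-polynomial} rational map. The denominator is $q(z)=1+a\cdot z$ with $a$ small, and the Hermitian form is
\[
Q_0(z,\bar z)=\norm{z}^{2(m-k)}\Bigl(\tfrac{1}{c}\prod_{j=1}^{k}\bigl(\norm{z}^2-r_j^2\bigr)+\abs{q(z)}^2\norm{z}^{2(k-1)}\Bigr).
\]
Positive semidefiniteness is obtained by a continuity argument: at $a=0$ the coefficient matrix is explicitly positive definite, hence it remains so for small $a$. Irreducibility of $p/q$ is handled by noting that $\abs{1+az_1}^2$ divides $\prod(\norm{z}^2-r_j^2)$ for at most finitely many $a$.

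You instead split into cases. For $k<m$ you invoke \cref{thm:poly-only-k-fold}, which is perfectly legitimate since polynomial maps are rational. For the critical case $k=m$ you build a different form, with $q(z)=z_1-T$ for $T>r_m$, and replace the paper's continuity argument by an explicit computation of the kernel of $M_0$ together with Rolle's theorem to show $M_1|_{\ker M_0}\succ 0$. That kernel computation (the recursion $v_{(k,\beta)}=T^{k}v_{(0,\beta)}$ and the identification of $v^*M_1v$ with a positive combination of $P^{(|\beta|)}(T^2)$) is correct and rather elegant; it gives a concrete quantitative reason for positivity rather than a soft perturbation. Your $m=1$ case via a non-unitary ball automorphism is also fine, and in fact could be cited directly from the remark after \cref{lem:rational-k-fold} (a $2$-fold sphere map must have $\deg q<\deg p$); note too that your $m\geq 2$ construction already works verbatim for $m=1$, so the extra case is not strictly needed.

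Two minor remarks. First, the theorem as stated points to the explicit family \eqref{eq:rational-k-fold}; your maps are different, so you are proving the existence assertion rather than exhibiting that particular family. Second, in your degree check the bidegree-$(m,m)$ part of $H$ is $\norm{z}^{2(m-1)}\abs{z_1}^2+c\norm{z}^{2m}$, not just $c\norm{z}^{2m}$; the conclusion $\deg p=m$ is of course unaffected.

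What each approach buys: the paper's construction is uniform in $k$ and always yields genuinely non-polynomial examples (so it simultaneously sharpens \cref{thm:poly-only-k-fold}); your approach reuses the polynomial theorem where possible and, in the boundary case $k=m$, trades the paper's soft perturbation for an explicit spectral criterion that makes the admissible range of the parameter ($T>r_m$) transparent.
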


\begin{proof}
Take any arbitrary degree-$1$ polynomial function $q \colon \C^n \to \C$, that is,
of the form $a_0 + a \cdot z = a_0 + a_1 z_1 + \dots + a_n z_n$, which after a unitary transformation becomes $a_0 + a_1 z_1$ with $a_1 \in \C$, which after another unitary transformation becomes $a_0 + a_1 z_1$ with $a_1 > 0$.
If $q$ is the denominator of a rational map, $q$ can be scaled to have $q(0) = 1$.
Thus without loss of generality, assume that $q(z) = 1 + a z_1, a > 0$.

Take arbitrary distinct $r_j > 0$ for $j = 1, \dots, k$.
Consider the bidegree-$(k,k)$ real polynomial
\begin{align*}
    Q''(z, \bar{z})
    &= \prod_{j=1}^k \bigl(\norm{z}^2 - r_j^2\bigr) \\
    &= \prod_{j=1}^k \bigl(\abs{z_1}^2 + \dots + \abs{z_n}^2 - r_j^2\bigr) \\
    &= \sum_{0 \leq \abs{\alpha} \leq k} c_\alpha \abs{z^\alpha}^{2},
\end{align*}
which is constant on $\norm{z} = s$ for all $s > 0$.
Choose $c$ such that
\[
    c > \max\bigl\{\abs{c_\alpha} : 0 \leq \abs{\alpha} \leq k\bigr\} > 0,
\]
write 
\[
    \norm{z}^{2(k-1)}
    = \left(\abs{z_1}^2 + \dots + \abs{z_n}^2\right)^{k-1}
    = \sum_{0 \leq \abs{\alpha} \leq k-1} d_\alpha \abs{z^\alpha}^{2},
\]
and consider
\begin{align*}
    Q'(z, \bar{z}) 
    &= \frac{1}{c} Q''(z, \bar{z})
    + \norm{z}^{2(k-1)} \\
    &= \sum_{0 \leq \abs{\alpha} \leq k} \frac{c_\alpha}{c} \abs{z^\alpha}^{2} 
    + \sum_{0 \leq \abs{\alpha} \leq k-1} d_\alpha \abs{z^\alpha}^{2} \\
    &= \sum_{0 \leq \abs{\alpha} \leq k} e_\alpha \abs{z^\alpha}^2,
\end{align*}
where
\[
    e_\alpha
    = \begin{cases}
        \frac{c_\alpha}{c}, & \abs{\alpha} = k \\
        \frac{c_\alpha}{c} + d_\alpha, & \abs{\alpha} \leq k-1
    \end{cases}
\]

We see that
for $\abs{\alpha} = k$, each $c_\alpha = 1 > 0$, so that each $e_\alpha > 0$,
and for $\abs{\alpha} \leq k - 1$,
each $\abs{\frac{c_\alpha}{c}} < 1$ and each $d_\alpha \geq 1$, so that
each $e_\alpha > 0$.
Thus the matrix $(Q')$ of coefficients of $Q'(z, \bar{z})$ is positive definite.

Now consider
\begin{align*}
    Q(z, \bar{z}) 
    &= \frac{1}{c} Q''(z, \bar{z})
    + \abs{q(z)}^2 \norm{z}^{2(k-1)}.
\end{align*}

As $a \to 0$, we get that $q \to 1$, $Q(z, \bar{z}) \to Q'(z, \bar{z})$ and
the matrix $(Q)$ approaches $(Q')$, which is positive definite.
This means that $(Q)$ is also positive definite for small enough $a > 0$.
Thus there is a polynomial map $p_0$ such that
$Q(z,\bar{z}) = \norm{p_0(z)}^2$.

Finally, we form
\[
    Q_0(z, \bar{z}) 
    = \norm{z}^{2(m-k)} Q(z, \bar{z})
    = \norm{z^{\otimes (m-k)} \otimes p_0(z)}^2.
\]
This gives us
\begin{align*}
    \frac{Q_0(z, \bar{z})}{\abs{q(z)}^2}
    &=  \frac{\frac{1}{c} \norm{z}^{2(m-k)} \prod_{j=1}^k \bigl(\norm{z}^2 - r_j^2\bigr) + \norm{z}^{2(m-1)} \abs{q(z)}^2}
    {\abs{q(z)}^2} \\
    &= \frac{1}{c} \frac{\norm{z}^{2(m-k)} \prod_{j=1}^k \bigl(\norm{z}^2 - r_j^2\bigr)}
    {\abs{q(z)}^2}
    + \norm{z}^{2(m-1)}.
\end{align*}

Now, $\prod_{j=1}^k \bigl(\norm{z}^2 - r_j^2\bigr)$ has a finite bidegree and hence
can be divisible by $\abs{1 + a z_1}^2$ for only a finite number of values of $a$.
So assume that $a > 0$ is small enough so that $\abs{q(z)}^2$ does not divide $\prod_{j=1}^k \bigl(\norm{z}^2 - r_j^2\bigr)$.
We see that $\frac{Q_0(z, \bar{z})}{\abs{q(z)}^2}$ is constant on $\norm{z} = s$ for only $s = r_1, \dots, r_k$ due to the $\abs{q(z)}^2$ term
and gives a rational map in reduced terms
\[
    \frac{p(z)}{q(z)}
    = \frac{z^{\otimes(m-k-1)} \otimes p_0(z)}{1 + a z_1} .
\]

Replacing $1 + a z_1$ with a more general denominator $q(z) = 1 + a \cdot z$ with small enough nonzero $a \in \C^n$ gives a rational map in reduced terms
\begin{equation}
\label{eq:rational-k-fold}
    f(z) 
    = \frac{p(z)}{q(z)}
    = \frac{z^{\otimes(m-k-1)} \otimes p_0(z)}{1 + a \cdot z} .
\end{equation}

This defines a family of maps for some $N$, where
each member is a rational $k$-fold sphere map $f = \frac{p}{q} \colon \C^n \dashrightarrow \C^N$ of degree of $m$ that is not a $(k+1)$-fold sphere map.
\end{proof}

The description of $\norm{f(z)}^2$ in \cref{thm:poly} enables us to obtain a normal form of $f$ when $f$ is a rational $\infty$-fold sphere map, proving the last part of \cref{thm:mfoldmaps}.

\begin{theorem}
\label{thm:decomposition}
    If $f$ is a rational $\infty$-fold sphere map, then
    $f$ is polynomial and for every $r > 0$ there exists an $R > 0$ such that
    $f(rS^{2n-1}) \subset RS^{2N-1}$.
    Moreover, there exists a unitary $U \in U(\C^N)$
    and homogeneous sphere maps (possibly constant)
    $h_j \colon \C^n \to \C^{\ell_j}$, $j=1,\ldots,k$ and where $\ell_1+\cdots+\ell_k \leq N$, such that
\begin{equation} \label{thm:decomposition:eq}
    f = U ( h_1 \oplus \cdots \oplus h_k \oplus 0) .
\end{equation}
\end{theorem}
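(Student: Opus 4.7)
The plan is to invoke \cref{thm:rational} together with \cref{cor:rational-infty-fold} to conclude immediately that $f$ is polynomial and that $\norm{f(z)}^2 = P(\norm{z}^2)$ for a one-variable polynomial $P(t) = \sum_{m=0}^{d} a_m t^m$; this already delivers the sphere-preservation statement, since the $r$-sphere then maps into the sphere of radius $\sqrt{P(r^2)}$. The remaining content of the theorem is the direct-sum decomposition \eqref{thm:decomposition:eq}, which I would extract from this squared-norm identity via a bidegree analysis of the homogeneous parts of $f$.

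The next step is to decompose $f = p_0 + p_1 + \cdots + p_d$ into its homogeneous pieces and to write $p_j(z) = \sum_{\abs{\alpha}=j} v_\alpha^{(j)} z^\alpha$ for coefficient vectors $v_\alpha^{(j)} \in \C^N$. Expanding yields
\[
\norm{f(z)}^2 = \sum_{j,k=0}^d \inner{p_j(z), p_k(z)},
\]
and the summand $\inner{p_j(z), p_k(z)}$ is purely of bidegree $(j,k)$ in $(z,\bar z)$. On the other hand, $P(\norm{z}^2) = \sum_m a_m \norm{z}^{2m}$ contains only bidegree-$(m,m)$ terms. Matching bidegrees forces
\[
\inner{p_j(z), p_k(z)} \equiv 0 \quad (j \neq k), \qquad \norm{p_j(z)}^2 = a_j \norm{z}^{2j} \quad (\text{all } j).
\]
By linear independence of the monomials $z^\alpha \bar z^\beta$, the first identity is equivalent to $\inner{v_\alpha^{(j)}, v_\beta^{(k)}} = 0$ for all $\alpha,\beta$ whenever $j \neq k$, while the second says each nonzero $p_j$ is a homogeneous sphere map (a constant if $j=0$).

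Finally, let $J = \set{j : p_j \not\equiv 0}$, enumerate it as $j_1 < \cdots < j_k$, put $V_{j_i} = \mathrm{span}\set{v_\alpha^{(j_i)} : \abs{\alpha}=j_i} \subset \C^N$, and set $\ell_i = \dim V_{j_i}$. The orthogonality established above says the $V_{j_i}$ are pairwise Hermitian-orthogonal, so $\ell_1 + \cdots + \ell_k \leq N$. I would then pick an orthonormal basis of $\C^N$ consisting of orthonormal bases of the $V_{j_i}$ concatenated with an orthonormal basis of the orthogonal complement of $\bigoplus_i V_{j_i}$, and take $U \in U(\C^N)$ to be the unitary sending the standard basis to this chosen basis. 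Writing each $p_{j_i}$ in the chosen coordinates on $V_{j_i}$ produces a homogeneous sphere map $h_i \colon \C^n \to \C^{\ell_i}$, and by construction $f = U(h_1 \oplus \cdots \oplus h_k \oplus 0)$, which is \eqref{thm:decomposition:eq}. The main obstacle is really just the bidegree bookkeeping in the middle step; once the squared norm is known to be a polynomial in $\norm{z}^2$, everything else is routine linear algebra.
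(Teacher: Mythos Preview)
Your proof is correct, and it is close in spirit to the paper's argument but diverges at the final step in an instructive way.

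Both you and the paper begin identically: invoke \cref{thm:rational} to get that $f$ is polynomial, and use the Newton-polynomial machinery to conclude $\norm{f(z)}^2 = \sum_m a_m \norm{z}^{2m}$. From there the paper takes an external shortcut: it defines auxiliary maps $h^j = \sqrt{a_j}\, H_j$ built from the symmetrized tensors $H_j$, observes that $h = \bigoplus h^j$ satisfies $\norm{h}^2 = \norm{f}^2$, and then cites D'Angelo's result (mentioned in the introduction) that two maps with the same squared norm are unitarily equivalent after zero-padding. You instead work intrinsically with $f$ itself: your bidegree comparison shows the homogeneous parts $p_j$ of $f$ have pairwise orthogonal coefficient spans $V_j$, and you build $U$ by hand from an orthonormal basis adapted to $\bigoplus V_j$. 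Your $h_i$ are the actual homogeneous pieces of $f$ written in block coordinates, whereas the paper's $h_i$ are canonical model maps a priori unrelated to $f$.

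What each buys: the paper's route is shorter to write down and makes the connection to Rudin's classification of homogeneous sphere maps explicit (each $h_j$ is literally a scalar multiple of $H_j$). Your route is self-contained, avoids invoking D'Angelo's lemma as a black box, and gives the slightly sharper statement that the decomposition can be taken to be the homogeneous decomposition of $f$ itself. The orthogonality $\inner{p_j, p_k} \equiv 0$ for $j \neq k$ that you extract is exactly the content hidden inside D'Angelo's lemma when applied here, so you have in effect unpacked that citation.
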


\begin{proof}
The first part of the statement follows from Theorem~\ref{thm:rational}.

Suppose the degree of the polynomial $f$ is $m$. To obtain the stated presentation of $f$, we use the expression of $\norm{f(z)}^2$ in \cref{thm:poly}:
\begin{align*}
    \norm{f(z)}^2
    &= [R_1^2] + [R_1^2, R_2^2] \bigl(\norm{z}^2-r_1^2\bigr) + [R_1^2, R_2^2, R_3^2] \bigl(\norm{z}^2-r_1^2\bigr) \bigl(\norm{z}^2-r_2^2\bigr) + \dots \\
    &\phantom{={}} + Q_m(z, \bar{z}) \bigl(\norm{z}^2-r_1^2\bigr) \dots \bigl(\norm{z}^2-r_m^2\bigr), 
\end{align*}
where $Q_m(z,\bar{z})$ is a constant, and $f$ maps a sphere of radius $r_j$ centered at the origin to a sphere of radius $R_j$ centered at the origin.

Now as $f$ is a degree-$m$ polynomial, $\norm{f(z)}^2$ is a bidegree-$(m,m)$ polynomial in the coordinates $z_j, \bar{z}_j$, and it must be of the above form. Thus $\norm{f(z)}^2$ is of the form:
\[
    \norm{f(z)}^2 = C_0 + C_1 \norm{z}^2 + C_2 \norm{z}^4 + \dots + C_m \norm{z}^{2m},
\]
where the $C_j$s are nonnegative real numbers, and $C_m > 0$ since $f$ is a polynomial mapping of degree $m$. Set $h^j(z) = \sqrt{C_j} H_j(z)$ (where $h^0(z)$ is the constant mapping $\sqrt{C_0}$),
the scaled and symmetrized $j$-fold tensor of the identity map with itself.
Suppose $d_1 < d_2 < \cdots < d_k$ are exactly the degrees for which $C_{d_j} > 0$.
Let $h_j = h^{d_j}$.
Then $h=h_1 \oplus \cdots \oplus h_k$ is a $\infty$-fold
sphere map with linearly independent components.  We have that $\norm{f(z)}^2=\norm{h(z)}^2$ for all $z$,
thus by a result of D'Angelo~\cite{dangelo-1993-several}, after possibly adding zero components to $h$,
there is a unitary $U$ such that \eqref{thm:decomposition:eq} holds.
\end{proof}

\begin{remark}
    Let $n \geq 2$.
    Given any nonconstant rational map $F \colon \C^n \dashrightarrow \C^N$ that takes $r$-sphere to $R$-sphere,
    we can scale it by $f(z) = \frac{1}{R} F(rz)$,
    giving us a rational sphere map and hence a rational proper map of balls $f \colon \B_n \to \B_N$.

    Thus our results can be stated for rational proper maps of balls as the following:
\end{remark}

\begin{theorem}
\pagebreak[2]
Let $n \geq 2$.  Let $f = \frac{p}{q} \colon \B_n \to \B_N$ be a rational 
    proper map of balls that
    takes $m-1$ zero-centric spheres in $\bB_n$ to
    zero-centric spheres in $\bB_N$.
    \begin{enumerate}[label=(\roman*)]
        \item 
        If $f$ is rational of degree less than $m$,
    then $f$ is a polynomial and takes all zero-centric spheres to zero-centric spheres.
    The limit $m$ is strict.

        \item 
        If $f$ is polynomial of degree at most $m$,
    then $f$ takes all zero-centric spheres to zero-centric spheres.
    The limit $m$ is strict.
    \end{enumerate}
\end{theorem}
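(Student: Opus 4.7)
The key observation is that a rational proper map of balls $f \colon \bB_n \to \bB_N$ automatically sends the unit sphere $S^{2n-1}$ to $S^{2N-1}$, which is itself a zero-centric sphere. Combined with the $m-1$ additional interior zero-centric spheres in $\bB_n$ that $f$ sends to zero-centric spheres in $\bB_N$, this means $f$ is a rational $m$-fold sphere map in the sense of \cref{sec:mfoldmaps}. The plan is then to apply the main structural results of the preceding subsection for the positive direction, and to rescale the constructions of \cref{thm:poly-only-k-fold,thm:rational-k-fold} for the sharpness claims; essentially all the heavy lifting has already been done in those results.

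For part~(i), suppose $f$ has degree $d \leq m-1$. Then $f$ is a rational $(d+1)$-fold sphere map of degree $d$ (by restricting attention to any $d+1$ of the $m$ available spheres, which is possible since $d+1 \leq m$), so \cref{thm:rational} applies and gives that $f$ is a polynomial $\infty$-fold sphere map. \Cref{cor:rational-infty-fold} then yields that $f$ takes all zero-centric spheres to zero-centric spheres. For part~(ii), if $\deg f < m$, the conclusion follows immediately from part~(i); if $\deg f = m$, then $f$ is a polynomial $m$-fold sphere map of degree $m$, and \cref{thm:poly} applies directly.

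For sharpness, \cref{thm:rational-k-fold} with $k = m$ and target numerator degree $m$ produces a rational $m$-fold sphere map of degree exactly $m$ that fails to be $(m+1)$-fold, and \cref{thm:poly-only-k-fold} with $k = m$ and target degree $m+1$ produces a polynomial $m$-fold sphere map of degree $m+1$ that fails to be $(m+1)$-fold. In either case, letting $r_m$ and $R_m$ denote the largest source and target radii among the $m$ chosen pairs and replacing the map $F$ by $f(z) = \tfrac{1}{R_m} F(r_m z)$ rescales so that the largest of the chosen spheres becomes the unit sphere and the remaining $m-1$ chosen source spheres are zero-centric spheres strictly inside $\bB_n$. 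The rescaled map is then a rational (respectively polynomial) proper map $\bB_n \to \bB_N$ of the stated degree satisfying the hypotheses of the theorem but violating its conclusion, so the bounds in (i) and (ii) cannot be relaxed. The only routine point to verify is that these rescaled maps are genuine proper maps of balls, which is automatic because any rational sphere map with no poles inside the corresponding ball is a proper map of that ball onto the target ball.
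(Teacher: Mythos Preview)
Your proposal is correct and follows essentially the same route as the paper, which treats this theorem as an immediate restatement of \cref{thm:poly,thm:rational,thm:poly-only-k-fold,thm:rational-k-fold} via the rescaling described in the preceding remark; you have simply made the translation explicit in both directions (observing that the unit sphere furnishes the $m$th sphere for the forward implication, and rescaling the counterexamples so that the largest sphere becomes $S^{2n-1}$ for sharpness). The only point worth tightening is the last sentence: for the rational counterexample you should note that the denominator $1 + a\cdot z$ has no zeros in $\overline{r_m\bB_n}$ for $a$ small, so that after rescaling the map is indeed pole-free on $\overline{\bB_n}$ and hence a genuine proper map of balls.
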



\section{Proper mappings of ball differences}
\label{sec:properballdiffs}

The first part of Theorem~\ref{thm:difference} is the following lemma.

\begin{lemma}
Suppose $n \geq 2$ and $B_r(c) \subset \C^n$,
$B_R(C) \subset \C^N$ are two balls such that
$B_r(c) \cap \bB_n \not= \emptyset$ and
$B_R(C) \cap \bB_N \not= \emptyset$.
Suppose $f \colon \bB_n \setminus \overline{B_r(c)} \to \bB_N \setminus \overline{B_R(C)}$
is a proper holomorphic map.
Then $f$ is rational
and extends to a rational proper map of balls $f \colon \bB_n \to \bB_N$
that takes the sphere $(\partial B_r(c)) \cap \bB_n$ to 
the sphere $(\partial B_R(C)) \cap \bB_N$.
\end{lemma}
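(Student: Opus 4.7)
The plan is to extend $f$ holomorphically to all of $\bB_n$ via a Hartogs-type argument, apply Forstneri\v{c}'s theorem at a point of the inner sphere (where continuity and properness determine the boundary image), and then use a reflection argument generalizing the proof of \cref{lemma:outside} to verify that the extension is proper onto $\bB_N$.

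First, the hypersurface $\partial B_r(c) \cap \bB_n$ is strongly pseudoconcave as the boundary of $\bB_n \setminus \overline{B_r(c)}$, because $\overline{B_r(c)}$ is strongly convex. Using the Kontinuit\"atssatz with a family of analytic disks tangent to $\partial B_r(c)$ at an interior point and translated into $B_r(c)$ (keeping boundaries outside $\overline{B_r(c)}$), one shows for $n \geq 2$ that the envelope of holomorphy of $\bB_n \setminus \overline{B_r(c)}$ contains all of $\bB_n$. Hence $f$ extends to a bounded holomorphic map $\tilde f \colon \bB_n \to \overline{\bB_N}$, and the maximum principle (using that $f$ is non-constant) forces $\tilde f(\bB_n) \subset \bB_N$. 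For any $z_0 \in \partial B_r(c) \cap \bB_n$, continuity of $\tilde f$ at $z_0$ combined with properness of the original $f$ gives $\tilde f(z_0) \in \bB_N \cap \partial(\bB_N \setminus \overline{B_R(C)}) = \partial B_R(C) \cap \bB_N$.

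Near such a $z_0$, $\tilde f$ is holomorphic on a full neighborhood and sends an open piece of $\partial B_r(c)$ into $\partial B_R(C)$. After translating both centers to the origin and rescaling the radii to one, this is precisely the hypothesis of Forstneri\v{c}'s theorem applied now to the ball $B_r(c)$, yielding that $\tilde f|_{B_r(c)}$ is rational and extends to a rational proper map $B_r(c) \to B_R(C)$. By analytic continuation on the connected set $\bB_n$, $\tilde f$ is rational on $\bB_n$.

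Finally, to verify properness of $\tilde f \colon \bB_n \to \bB_N$, I would apply the reflection formula from the proof of \cref{lemma:outside}, adapted to the rational proper map $\tilde f|_{B_r(c)} \colon B_r(c) \to B_R(C)$: for $z \notin \overline{B_r(c)}$ that is not a pole of $\tilde f$,
\[
    \bigl(\tilde f(z) - C\bigr) \cdot \overline{\bigl(\tilde f(z^*) - C\bigr)} = R^2,
\]
where $z^* = c + r^2(z - c)/\norm{z - c}^2$ is the reflection of $z$ across $\partial B_r(c)$. For a sequence $z_k \to p \in \partial \bB_n \setminus \overline{B_r(c)}$, the reflected points $z_k^*$ converge to an interior point of $B_r(c)$ at which $\tilde f$ takes a value strictly inside $B_R(C)$; Cauchy--Schwarz applied to the reflection identity then forces any subsequential limit $w = \lim \tilde f(z_k)$ to satisfy $\norm{w - C} > R$. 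Combined with the fact that $w \in \partial(\bB_N \setminus \overline{B_R(C)})$ (from properness of the original $f$), this yields $w \in \partial \bB_N$, and the argument for the remaining outer-boundary points is analogous. The main obstacle is justifying the Hartogs extension step, since $\overline{B_r(c)} \cap \bB_n$ need not be compactly contained in $\bB_n$, so one must invoke the Kontinuit\"atssatz with an explicit family of analytic disks exploiting the strong convexity of $B_r(c)$, rather than the classical Hartogs Kugelsatz.
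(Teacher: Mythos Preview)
Your strategy is close to the paper's but diverges at two points, and one of them leaves a genuine gap.

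First, the paper never performs a global Hartogs-type extension across $\overline{B_r(c)}\cap\bB_n$. It only extends $f$ locally across a single point of the pseudoconcave piece $\partial B_r(c)\cap\bB_n$, then invokes Forstneri\v{c}'s theorem there; rationality already furnishes the extension to all of $\C^n$ minus poles. To see that this local piece lands on the \emph{inner} target sphere, the paper argues that a generically full-rank holomorphic map must send a strictly pseudoconcave hypersurface to a strictly pseudoconcave one. Your route via the maximum principle after a global extension also works and is arguably cleaner on that point, but the global extension itself is the step you flag as the ``main obstacle,'' and the Kontinuit\"atssatz sketch you give (tangent disks pushed inward) only yields extension to a collar of $\partial B_r(c)$, not obviously to all of $B_r(c)\cap\bB_n$; more is needed there. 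The paper's route simply avoids this issue.

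Second, and more seriously, the sentence ``the argument for the remaining outer-boundary points is analogous'' does not hold up. When $\overline{B_r(c)}\not\subset\bB_n$ there may be points $p\in\partial\bB_n\cap B_r(c)$; a sequence $z_k\to p$ in $\bB_n$ eventually lies inside $B_r(c)$, so the properness of the original $f$ (which is only defined on the difference) gives you nothing, and the reflection $z_k^*$ now lies \emph{outside} $\overline{B_r(c)}$, so the inequality runs the wrong way. What your reflection argument \emph{does} establish is that the rational map $\tilde f$ sends the nonempty open piece $S^{2n-1}\setminus\overline{B_r(c)}$ into $S^{2N-1}$. From here the paper simply applies Forstneri\v{c} a second time, now at the unit sphere, to conclude directly that $\tilde f$ is a proper map $\bB_n\to\bB_N$. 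Equivalently, you could finish by real-analytic continuation of the identity $\norm{\tilde f}^2=1$ along the connected sphere $S^{2n-1}$; either way, an extra step beyond ``analogous'' is required.
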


\begin{proof}
Since $(\partial B_r(c)) \cap \bB_n$ is nonempty and $f$ is defined on the pseudoconcave side of it,
we find that $f$ extends locally past $\partial B_r(c)$ at some point.  As $f$ is proper it means that
$f$ must take a small piece of $\partial B_r(c)$ to some piece of the boundary of
$\bB_N \setminus \overline{B_R(C)}$.  As the derivative of $f$ is of full rank at most points,
we find that $f$ must take this small piece of the sphere to the pseudoconcave part of
$\bB_N \setminus \overline{B_R(C)}$, that is, it must go to some part of $\partial B_R(C)$.
Via  Forstneri\v{c}'s result, we find that $f$ is rational, and extends to a proper map
of $B_r(c)$ to $B_R(C)$.  In particular, $f$ extends past the boundary at some points 
of $S^{2n-1}$.  Then again it must take those points to the sphere $S^{2N-1}$ via the same argument
as above, and again it extends to a rational proper map of $\bB_n$ to $\bB_N$.  That it
takes $(\partial B_r(c)) \cap \bB_n$ to 
$(\partial B_R(C)) \cap \bB_N$ follows as the map we started with was proper.
\end{proof}

If the two spheres are concentric with the unit ball, that is, $c=0$ and $C=0$, then
the hypotheses on the spheres mean that $r < 1$ and $R < 1$.  The conclusion of the lemma
is then that $f$ is a rational proper map of the unit balls, hence takes
$S^{2n-1}$ to $S^{2N-1}$.  Moreover, the lemma says that $f$ also takes
$rS^{2n-1}$ to $RS^{2N-1}$, and thus is a rational $2$-fold sphere map.
Hence, by Lemma~\ref{lem:rational-k-fold},
we find that the degree of the denominator is less than the degree of the numerator,
proving the next part of Theorem~\ref{thm:difference}.

For the last part of Theorem~\ref{thm:difference} we prove the following lemma.

\begin{lemma}
Suppose $n \geq 2$ and $B_r(c) \subset \C^n$,
$B_R(C) \subset \C^N$ are two balls such that
$B_r(c) \cap \bB_n \not= \emptyset$ and
$B_R(C) \cap \bB_N \not= \emptyset$.
Suppose $f \colon \bB_n \to \bB_N$ is a holomorphic proper map
that takes the sphere $(\partial B_r(c)) \cap \bB_n$ to 
the sphere $(\partial B_R(C)) \cap \bB_N$.
Then $f$ is rational and restricts to a proper map of
$\bB_n \setminus \overline{B_r(c)}$ to $\bB_N \setminus \overline{B_R(C)}$.
\end{lemma}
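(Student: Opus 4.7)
My plan is as follows.

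\emph{Rationality.} I would pick any point $\zeta \in \partial B_r(c) \cap \bB_n$ and choose a small open neighborhood $U$ of $\zeta$ with $U \subset \bB_n$. On $U$, $f$ is holomorphic and takes $\partial B_r(c) \cap U$ into $\partial B_R(C)$. Applying the affine changes of coordinates $z \mapsto (z-c)/r$ on the source side and $w \mapsto (w-C)/R$ on the target side places the map in the setting of Forstneri\v{c}'s extension theorem (cited in the introduction). Hence $f$ is rational near $\zeta$, and therefore on the connected set $\bB_n$.

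\emph{The reflection identity.} Writing $f = p/q$ in lowest terms, the identity $\norm{p(z) - Cq(z)}^2 = R^2 \abs{q(z)}^2$ holds on the open piece $\partial B_r(c) \cap \bB_n$ of the real-analytic sphere, so by real-analytic continuation it holds on all of $\partial B_r(c)$. Since $\norm{z-c}^2 - r^2$ is the defining polynomial of this sphere, there is a real polynomial $Q(z, \bar z)$ with
\begin{equation*}
    \norm{p(z) - Cq(z)}^2 - R^2 \abs{q(z)}^2 = Q(z, \bar z) \bigl(\norm{z-c}^2 - r^2\bigr).
\end{equation*}
Polarizing and then choosing the inversion $w = c + r^2(z-c)/\norm{z-c}^2$, for which $(z-c)\cdot\overline{(w-c)} = r^2$, yields the reflection identity
\begin{equation*}
    (f(z) - C) \cdot \overline{(f(w) - C)} = R^2,
\end{equation*}
valid wherever $q(z)\bar q(\bar w) \neq 0$.

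\emph{Strict sign and properness.} At a point of $\partial B_r(c) \cap \bB_n$ where $f$ has maximal CR rank, the strict pseudoconvexity of both spheres and a Levi form computation applied to $f^* (\norm{w-C}^2 - R^2) = Q \cdot (\norm{z-c}^2 - r^2)/\abs{q}^2$ force $Q > 0$; density of such points and continuity then give $Q \geq 0$ on an open neighborhood of $\partial B_r(c) \cap \bB_n$ in $\bB_n$. Thus for $z \in \bB_n \setminus \overline{B_r(c)}$ close to the sphere, the reflection $w$ lies in $B_r(c) \cap \bB_n$ near the sphere, where $\norm{f(w) - C} < R$ by the same pseudoconvexity analysis; Cauchy--Schwarz applied to the reflection identity forces $\norm{f(z) - C} > R$. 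To propagate this strict inequality throughout $\bB_n \setminus \overline{B_r(c)}$, I would observe that $\norm{f(z) - C}^2 - R^2$ vanishes on $\bB_n$ only on $\partial B_r(c) \cap \bB_n$ or on $\{Q = 0\}$, its sign is constant on each connected component of the complement, and the local analysis near $\partial B_r(c) \cap \bB_n$ pins the sign positive on $\bB_n \setminus \overline{B_r(c)}$. Properness of the restriction is then routine: for a compact $K \subset \bB_N \setminus \overline{B_R(C)}$, the preimage $f^{-1}(K)$ is already compact in $\bB_n$ by properness of $f \colon \bB_n \to \bB_N$, has no limit points on $S^{2n-1}$ by the same properness, and has no limit points on $\partial B_r(c) \cap \bB_n$ since $f$ sends this sphere into $\partial B_R(C)$, which is separated from $K$.

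The main obstacle will be the propagation of the strict sign in the third step, especially in non-concentric configurations where the reflection $w$ of a bulk point $z \in \bB_n \setminus \overline{B_r(c)}$ can leave $\bB_n$; overcoming this requires the global sign analysis of $Q$ and $\norm{f(z) - C}^2 - R^2$ sketched above, resting on rationality of $f$ and the strict pseudoconvexity of the interior sphere.
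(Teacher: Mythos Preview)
Your rationality step and your reflection identity are both correct and mirror exactly what the paper does (the reflection identity is the content of Lemma~\ref{lemma:outside}, transplanted to $B_r(c)$ and $B_R(C)$). The difference, and the source of your difficulties in the third step, is that you extract from Forstneri\v{c}'s theorem only the rationality of $f$, whereas the theorem as stated in the introduction gives more: the map \emph{extends to a proper map of the balls}. Applied to your affine-normalized situation, this means $f$ restricts to a rational proper map $B_r(c) \to B_R(C)$. In particular $\norm{f(w)-C} < R$ for \emph{every} $w \in B_r(c)$, not just for $w$ near the sphere.

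With that single input, everything in your third step collapses to a direct invocation of Lemma~\ref{lemma:outside}: for any $z \notin \overline{B_r(c)}$ (and $f$ has no poles in $\bB_n$), the reflected point $w = c + r^2(z-c)/\norm{z-c}^2$ lies in $B_r(c)$, so $\norm{f(w)-C} < R$, and Cauchy--Schwarz on the reflection identity gives $\norm{f(z)-C} > R$ immediately and globally. No Levi form computation, no sign propagation, no worry about whether the reflection $w$ stays in $\bB_n$. This is exactly the paper's proof.

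Your proposed route---deducing $Q>0$ at a generic CR point from pseudoconvexity, then propagating the sign---has a genuine gap as written. You only obtain $Q \geq 0$ on a neighborhood of $\partial B_r(c) \cap \bB_n$; nothing you have said prevents the real-analytic set $\{Q=0\}$ from disconnecting $\bB_n \setminus \overline{B_r(c)}$, and on any component separated from the sphere your local analysis gives no information. You yourself flag this as the main obstacle. The fix is not more sign-chasing; it is to go back and use the full strength of Forstneri\v{c}.
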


\begin{proof}
The first part of the theorem follows by 
Forstneri\v{c}'s result again as we get that the piece of 
the sphere $(\partial B_r(c)) \cap \bB_n$ is taken to the piece of the sphere
$(\partial B_R(C)) \cap \bB_N$.  So $f$ is rational.
As $f$ is a rational proper map of balls $\bB_n$ to $\bB_N$, it extends past the boundary by
the result of Cima--Suffridge~\cite{cima-1990-boundary}, and hence takes the 
the sphere $S^{2n-1}$ to the sphere $S^{2N-1}$.
Similarly, as the application of Forstneri\v{c}'s theorem gave a proper rational
map of $B_r(c)$ to $B_R(C)$, we get that $f$ takes 
$\partial B_r(c)$ to $\partial B_R(C)$.
To be a proper map of the differences, we need no point of $\bB_n \setminus \overline{B_r(c)}$
to go to a point of $\overline{B_R(C)}$.
An even stronger conclusion holds,
as $f$ is a proper rational map of $B_r(c)$ to $B_R(C)$,
Lemma~\ref{lemma:outside} gives that no point of the complement of $\overline{B_r(c)}$
in $\C^n$ can go to $\overline{B_R(C)}$.
\end{proof}

Theorem~\ref{thm:difference} is now proved.
Finally, we prove \cref{prop:difference-complement}:
there are no proper maps from the difference of balls to the complement of balls or vice-versa.

\begin{proposition}
Suppose $n \geq 2$,
$\bB_n \cap B_r(c) \not= \emptyset$,
and
$\bB_N \cap B_R(C) \not= \emptyset$.
There exist no proper holomorphic maps
$f \colon \bB_n \setminus \overline{B_r(c)} \to \C^N \setminus \overline{\bB_N}$ nor
$f \colon \C^n \setminus \overline{\bB_n} \to \bB_N \setminus \overline{B_R(C)}$.
\end{proposition}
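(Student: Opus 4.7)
The plan is to argue by contradiction separately in the two cases. For the map out of the complement we use Hartogs' Kugelsatz together with Liouville's theorem. For the map out of the difference we use the Forstneri\v{c}-based reflection argument already employed in the proof of \cref{thm:difference}, combined with \cref{lemma:outside}.

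For the nonexistence of a proper map $f \colon \C^n \setminus \overline{\bB_n} \to \bB_N \setminus \overline{B_R(C)}$, the argument is short. Since $n \geq 2$, Hartogs' Kugelsatz extends $f$ to an entire map $\tilde f \colon \C^n \to \C^N$. The target sits inside $\bB_N$, so $\norm{f(z)} < 1$ on $\C^n \setminus \overline{\bB_n}$; by continuity $\tilde f$ is bounded on the compact set $\overline{\bB_n}$ as well, hence on all of $\C^n$. Liouville's theorem forces $\tilde f$ to be constant, but a constant map cannot be proper from the noncompact domain $\C^n \setminus \overline{\bB_n}$.

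For the nonexistence of a proper map $f \colon \bB_n \setminus \overline{B_r(c)} \to \C^N \setminus \overline{\bB_N}$, suppose such an $f$ exists. The inner piece $\Gamma := (\partial B_r(c)) \cap \bB_n$ is nonempty and $f$ is defined on its pseudoconcave side. Exactly as in the proof of \cref{thm:difference}, $f$ extends past $\Gamma$ locally, properness forces it to send some piece of $\Gamma$ into the finite boundary $S^{2N-1}$ of $\C^N \setminus \overline{\bB_N}$, and Forstneri\v{c}'s theorem then produces a rational map $\tilde f \colon \C^n \dashrightarrow \C^N$ that restricts to a rational proper map $B_r(c) \to \bB_N$ and, by analytic continuation, agrees with $f$ on the entire annulus.

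Applying \cref{lemma:outside} to the scaled rational proper self-map of balls $z \mapsto \tilde f(c + rz)$, we get $\norm{\tilde f(w)} > 1$ strictly for every $w \notin \overline{B_r(c)}$ that is not a pole of $\tilde f$. Now consider the outer boundary piece $S := (\partial \bB_n) \setminus \overline{B_r(c)}$, a nonempty open subset of $\partial \bB_n$. At any $p \in S$ that is not a pole of $\tilde f$, the value $\tilde f(p)$ is finite and $\norm{\tilde f(p)} > 1$. Properness of $f$, however, demands that for sequences in the annulus converging to $p$ the $f$-images cluster in $S^{2N-1} \cup \set{\infty}$; continuity and finiteness of $\tilde f$ at $p$ then force $\norm{\tilde f(p)} = 1$, incompatible with $\norm{\tilde f(p)} > 1$. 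Hence every point of $S$ must be a pole of $\tilde f$. But the pole set of a rational map is a complex algebraic hypersurface of real codimension $2$ in $\C^n$, while $S$ has real dimension $2n-1$; a real codimension-$1$ set cannot lie inside a real codimension-$2$ set, and we have our contradiction.

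The main obstacle is the first step in the second case, namely producing the rational extension past the pseudoconcave inner sphere; once that extension is granted (via the same reflection/Forstneri\v{c} mechanism used for \cref{thm:difference}), \cref{lemma:outside} and the codimension count close the argument cleanly, and the complement-to-difference case is essentially immediate from Hartogs and Liouville.
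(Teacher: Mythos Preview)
Your argument is correct. The complement-to-difference direction is identical to the paper's (Hartogs plus Liouville). For the difference-to-complement direction you and the paper begin the same way: extend across the pseudoconcave inner sphere, invoke Forstneri\v{c} to get a rational map that is proper from $B_r(c)$ to $\bB_N$, and observe that properness plus finiteness at non-pole points of the outer sphere forces those points to land on $S^{2N-1}$. The endgames diverge. The paper feeds that last observation back into Forstneri\v{c} a second time, concluding that $f$ is a proper map $\bB_n \to \bB_N$, which is absurd since $f$ sends the annulus outside $\overline{\bB_N}$. You instead invoke \cref{lemma:outside} on the inner ball to get $\norm{\tilde f}>1$ strictly outside $\overline{B_r(c)}$, which already contradicts $\norm{\tilde f(p)}=1$ at any non-pole $p$ on the outer sphere; the codimension count guarantees such $p$ exist. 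Your route trades the second Forstneri\v{c} application for \cref{lemma:outside} plus a dimension argument; the paper's route is a bit more uniform, while yours makes the contradiction pointwise and avoids re-invoking the heavier extension theorem.
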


\begin{proof}
First, suppose
$f \colon \bB_n \setminus \overline{B_r(c)} \to \C^N \setminus \overline{\bB_N}$
is a proper holomorphic map.
As before, $f$ extends holomorphically 
across some piece of the pseudoconcave boundary $\partial B_r(c)$.
Hence it takes some piece of
$\partial B_r(c)$ to a bounded set, and as the map is proper it must go
to the boundary of the target, that is, $S^{2N-1}$.
By the theorem of Forstneri\v{c}, $f$ is rational.
As $f$ is rational, we can also
extend to most points of $S^{2n-1}$.
But this means that this extension
is a holomorphic map that takes a piece of the
sphere $S^{2n-1}$ to the finite part of the boundary, that is, $S^{2N-1}$.
Again, Forstneri\v{c} says that this map is actually a proper map of
$\bB_n \to \bB_N$.
That is impossible as $f$ takes $\bB_n \setminus \overline{B_r(c)}$ to the complement
of $\overline{\bB_N}$.

Next, suppose that we are given a proper holomorphic map
$f \colon \C^n \setminus \overline{\bB_n} \to \bB_N \setminus \overline{B_R(C)}$.
By Hartogs, $f$ extends to all of $\C^n$.  By the maximum principle $f$ takes $\overline{\bB_n}$ to $\bB_N$,
but that violates Liouville's theorem.
\end{proof}


\bibliographystyle{amsplain}
\bibliography{main}

\end{document}